\numberwithin{equation}{section}
\newcommand{\C}{\mathbf{C}}
\newcommand{\dist}{\mathrm{dist}}
\newcommand{\e}{\mathrm{e}}
\newcommand{\mO}{\mathcal{O}}
\newtheorem{dref}{Definition}[section] \newtheorem{lemma}[dref]{Lemma}
\newtheorem{theo}[dref]{Theorem} \newtheorem{prop}[dref]{Proposition}
\newtheorem{remark}[dref]{Remark}
\title[Bi-diagonal matrices and random perturbations]{Interior eigenvalue density of large bi-diagonal matrices subject to random perturbations}
\author{Johannes Sj\"ostrand}
\address[Johannes Sj\"ostrand]{IMB, 
  Universit\'e de Bourgogne Franche-Comt\'e, 
  UMR 5584 du CNRS, 
  9, avenue Alain Savary - BP 47870 FR-21078 Dijon Cedex.}
\email{johannes.sjostrand@u-bourgogne.fr}
\author{Martin Vogel}
\address[Martin Vogel]{D\'epartement de Math\'ematiques - UMR 8628 CNRS, B\^atiment 440, Universit\'e Paris-Sud, 
15 Rue du Doyen Georges Poitou, F-91405 Orsay Cedex.}
\email{martin.vogel@math.u-psud.fr}
 \date{}
 \keywords{Spectral theory; non-self-adjoint operators; random perturbations}
\subjclass[2010]{47A10, 47B80, 47H40, 47A55}
\dedicatory{Dedicated to Professor Takahiro Kawai and Professor Hikosaburo Komatsu}
\begin{document}
\begin{abstract}
 We study the spectrum of large a bi-diagonal Toeplitz matrix subject to 
 a Gaussian random perturbation with a small coupling constant. 
 We obtain a precise asymptotic description of the average 
 density of eigenvalues in the interior of the convex hull of the range 
 symbol. 
  \vskip.5cm
  \par\noindent \textsc{R{\'e}sum{\'e}.} 
 Nous \'etudions le spectre d'une grande matrice de Toeplitz 
 soumise \`a une perturbation gaussienne avec petite constante 
 de couplage. Nous obtenons une description asymptotique pr\'ecise 
 de la densit\'e moyenne des valeurs propres \`a l'int\'erieur l'enveloppe 
 convexe de l'image du symbole. 
\end{abstract}
\maketitle
\setcounter{tocdepth}{1}
%
%
\section{Introduction and main result}\label{int}
\setcounter{equation}{0}
It is well known that the spectrum of non-normal operators can be extremely 
unstable even under tiny perturbations, see e.g. \cite{TrEm05,Da07}. It is 
therefore a natural question to study the spectra of such operators subject 
to small random perturbations. Recently, there has been a mounting interest 
in the spectral properties of elliptic non-normal (pseudo-)differential operators 
with small random perturbations, see for example \cite{BM,Ha06b,HaSj08,SjAX1002,Vo14,ZwChrist10}. 
An interesting, perhaps surprising, result is that by adding a small random 
perturbation, we can obtain a probabilistic Weyl law for the eigenvalues 
for a large class of such operators. 
\par
Another important example is the case of non-normal Toeplitz matrices, 
since they can arise for example in models non-hermitian quantum 
mechanics, see e.g. \cite{GoKh00,HaNe96}. The authors' interest in this 
case, however, is motivated by the aspect of spectral instability. 
%
\\
\par
The goal of this work is to study the spectrum of random perturbations of the
following bidiagonal $N\times N$  Toeplitz matrix:
\begin{equation}\label{int.1}
P=\begin{pmatrix} 0 &a &0 &.. &.. &0\\
b &0 &a &.. &..&0\\
0 &b &0 &.. &..&0\\
.. &.. &.. &..&..&..\\
0 & ..&.. &..&0 &a\\
0 &0 &.. &.. &b &0 \end{pmatrix}.
\end{equation}
Here $a,\, b\in {\bf C}\setminus \{ 0 \}$ and $N\gg 1$. Identifying ${\bf C}^N$
with $\ell^2([1,N])$, $[1,N]=\{ 1,2,..,N\}$ and also with $\ell^2_{[1,N]}({\bf
  Z})$ (the space of all $u\in \ell^2({\bf Z})$ with support in
$[1,N]$), we have:
\begin{equation}\label{int.3}
P=1_{[1,N]}(a\tau _{-1}+b\tau _1)1_{[1,N]}=1_{[1,N]}(a\e^{iD_x}+b\e^{-iD_x})1_{[1,N]},
\end{equation}
where $\tau _ku(j)=u(j-k)$ denotes translation by $k$, and 
	\begin{equation*}
		(a\e^{iD_x}+b\e^{-iD_x})u(n)= \frac{1}{2\pi} 
		\int_{\mathbf{R}/2\pi\mathbf{Z}}\mathrm{e}^{in\xi}p(\xi)\widehat{u}(\xi)d\xi,
		\quad u \in \ell^2(\mathbf{Z}),
	\end{equation*}
where $\widehat{u}$ denotes the Fourier transformation of $u$ and 
$p(\xi)$ is the symbol of $P$, given by 
\begin{equation}\label{int.5}
p(\xi )=a\e^{i\xi }+b\e^{-i\xi }.
\end{equation}
Assume, to fix the ideas, that $|b|\le |a|$. Then
$p({\bf R})$ is equal to the ellipse, $E_1$, centred at 0 with major
semi-axis of length $(|a|+|b|)$ pointing in the direction $e^{i(\alpha
+\beta )/2}$, where $\alpha = \mathrm{arg}(a)$, $\beta=\mathrm{arg}(b)$, 
and minor semi-axis of length $|a|-|b|$. The focal points
of $E_1$ are
\begin{equation}\label{rasy.2.5}
\pm 2\sqrt{ab}=\pm e^{i\frac{\alpha +\beta }{2}} 2\sqrt{|a||b|}.
\end{equation}
In a previous work \cite{SjVo15b} the authors have shown that
the numerical range of $P$ is contained in the
convex hull of the ellipse $E_1$ described above and the 
eigenvalues of $P$ are given by 
\begin{equation}\label{spnp.12}
  z=z(\nu )=2\sqrt{ab}\cos 
  \left( \frac{\pi \nu }{N+1} \right), 
  \quad \nu =1,\dots, N.
\end{equation}
This result is also illustrated in Figure \ref{fig1}. In this work, we consider 
the following random perturbation of $P$
\begin{equation}\label{int.5a}
 P_{\delta} := P + \delta Q_{\omega}, 
 \quad
 Q_{\omega}=(q_{j,k}(\omega))_{1\leq j,k\leq N}, 
\end{equation}
where $0\leq\delta\ll 1 $, possibly depending on $N$, 
and $q_{j,k}(\omega)$ are independent and 
identically distributed complex Gaussian random variables, 
following the complex Gaussian law $\mathcal{N}_{\C}(0,1)$. 
\begin{figure}[h]
\centering
\includegraphics[scale=0.6]{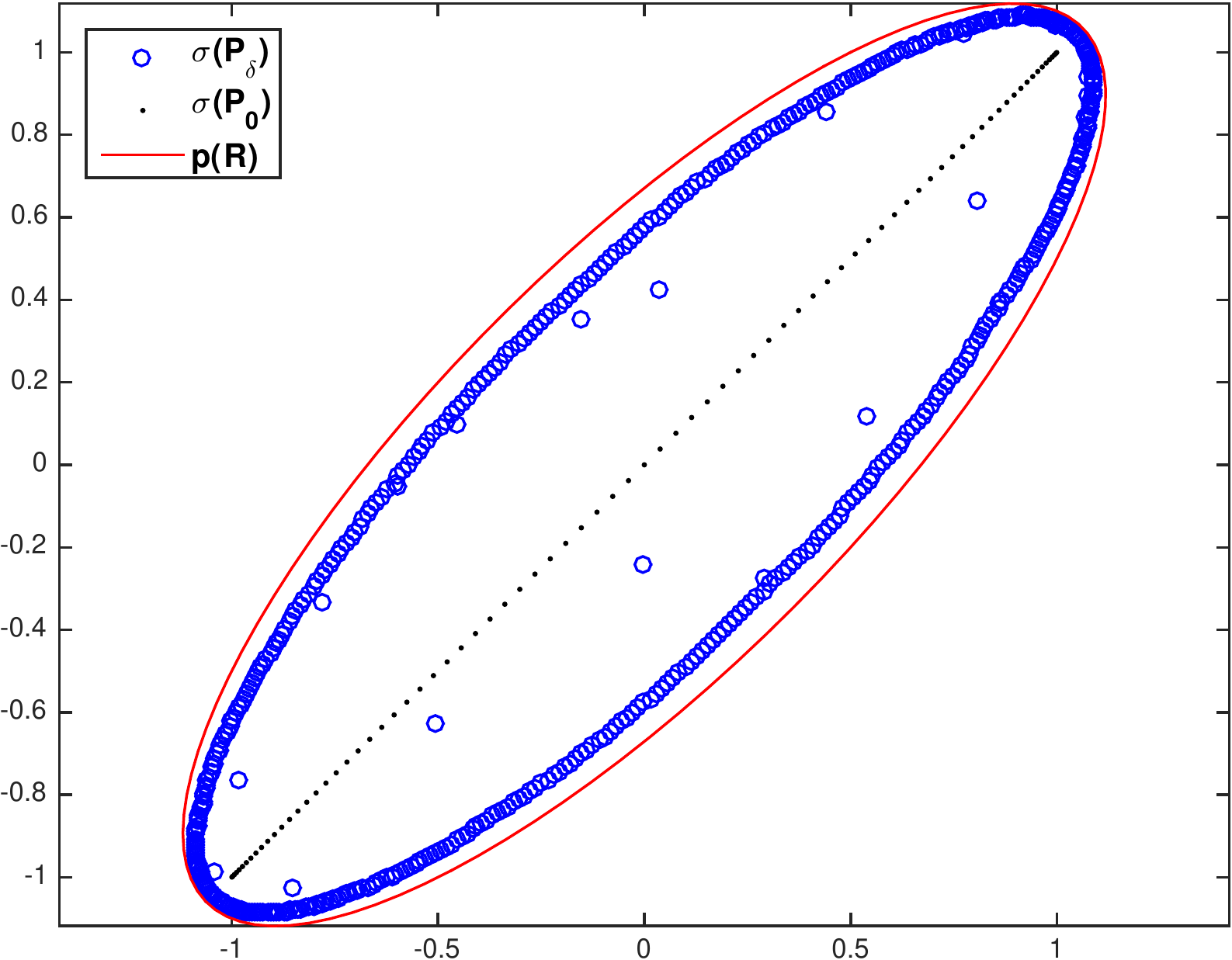}
\caption{The black dots along the focal segment show the 
spectrum (obtained using MATLAB) of the unperturbed operator $P$ with dimension $N=501$, 
$a=0.5$, $b=i$ and $\delta=10^{-12}$. The blue cirlces show the spectrum 
of the perturbed operator \eqref{int.5a}, and the red ellipse is the image of 
the symbol $p$.}
\label{fig1}
\end{figure}
In \cite{SjVo15b}, the authors proved that when the coupling constant $\delta$ is 
bounded from above and from below by sufficiently negative powers of $N$, 
then most eigenvalues of $P_{\delta}$, \eqref{int.5a}, 
are close to the ellipse $p(\mathbf{R})$ and follow a Weyl law, with probability 
close to one, as the dimension $N$ gets large (cf. Figure \ref{fig1}).
\par
The methods used in \cite{SjVo15b} are essentially based on probabilistic 
subharmonic estimates of $\ln |\det(P_{\delta}-z)|$ and complex analysis, 
using in particular a counting theorem of \cite{Sj09b} (see also \cite{Ha06,HaSj08}). 
However, this approach is not fine to enough give a detailed description of the 
exceptional eigenvalues seen inside the ellipse in Figure \ref{fig1} and 
we only obtain a logarithmic upper bound on the number of eigenvalues in this 
region. To gain more information about these eigenvalues, we study the random measure 
	\begin{equation}
		\Xi  :=\sum_{z\in\sigma(P_{\delta})}\delta_z,
	\end{equation}
where the eigenvalues are counted with multiplicity. In particular we are 
interested in studying the first intensity measure of $\Xi$, which is the 
positive measure $\nu$ defined by 
	\begin{equation}
	\mathds{E} \left[ \Xi(\varphi)\right] = \int \varphi(z)\nu(dz), 
	\end{equation}
where $\varphi$ is a test function of class $\mathcal{C}_0$. The measure 
$\nu$ contains information about the average density of eigenvalues, 
and we will show in Theorem \ref{thm1} below, that it admits a continuous 
density with respect to the Lebesgue measure on $\C$, up to a small error 
in the large $N$ limit.
\par 
This approach 
is more classical in the theory of random polynomials (cf. \cite{SZ03,BSZ00}) 
and random Gaussian analytic functions (cf. \cite{HoKrPeVi09,So00}). We 
follow in particular the approach developed in \cite{Vo14}, which was 
therein used to describe the average density of eigenvalues of a class of semiclassical differential 
operators subject to small random perturbations.
\\
\par
The main result of this paper describes the average density of eigenvalues 
in the interior of confocal ellipses.  Let $p_{a,b}=p$ as in \eqref{int.5}. 
For any $r>0$ we define $\Sigma_r$ to be the convex hull of 
$p_{ra,r^{-1}b}(\mathbf{R})$. We will see in Section \ref{sizz} that 
$p_{ra,r^{-1}b}(\mathbf{R})$, for $ (|b|/|a|)^{1/2} \leq r < +\infty$, are confocal 
ellipses and that they are in the interior of $\Sigma_{r_0}$, for every $r_0>r$. 
Moreover that $p_{ra,r^{-1}b}(\mathbf{R})$, with $r= (|b|/|a|)^{1/2}$, is the 
focal segment. 
\par
We prove the following result.
\begin{theo}\label{thm1}
Let $P_{\delta}$ be as in \eqref{int.5a} and let $p_{a,b}=p$ as in \eqref{int.5}. 
Let $C\gg 1$ be arbitrary, but fixed (and not necessarily the same in the sequel). 
Let $r_1=|b/a|^{1/2} +1/C$, let $\e^{-N/C} \leq \delta \ll 1$, $N\gg 1$ 
and let $r_0>0$ belong to the parameter range 
\begin{equation}\label{eq1.1}
\begin{split}
& \frac{1}{C} \leq r_0 \leq 1- \frac{1}{N}, \\ 
& \frac{Nr_0^{N-1}}{\delta}(1-r_0)^2 +\delta N^3 \ll  1,
\end{split}
\end{equation}
so that $\delta N^3 \ll 1$. For $r>0$, let $\Sigma_r$ be the convex hull of 
$p_{ra,r^{-1}b}(\mathbf{R})$. Then, for all 
$\varphi\in\mathcal{C}_0(\mathring{\Sigma}_{(r_0-1/N)}\backslash{\Sigma}_{r_1} )$, 
\begin{equation}\label{eq1}
 \mathds{E} \left[  \sum_{\lambda\in\sigma(P_{\delta})}\varphi(z)
 \right] =
  \int \varphi(z)\xi(z) L(dz)
  + \langle \mu_N, \varphi \rangle,
\end{equation}
for some $C\gg 1$. Here, the density $\xi$ is a continuous function 
satisfying, 
\begin{equation}\label{eq2}
	\begin{split}
	&\xi(z) = \frac{2}{\pi}\partial_{z}\partial_{\bar{z}}\ln K(z) 
	\left(1 +\mathcal{O}\!\left(\frac{N |\zeta_-|^{N-1}}{\delta }(1-|\zeta_-|)^2
 	+ \delta N^3
	\right)\right), \\ 
	&K(z)= \sum_{k=0}^{\infty}
	 \left|\frac{\zeta_-^{k+1} - \zeta_+^{k+1}}{a(\zeta_- - \zeta_+)}\right|^2,
	\end{split}
\end{equation}
where $\zeta_{\pm}(z)$ are the two solutions of the equation $p_{a,b}(\zeta)=z$ 
for $z\in\Sigma_1 \backslash [-2\sqrt{ab},2\sqrt{ab}]$, chosen such that 
$|\zeta_-| \geq |\zeta_+|$. $\partial_{z}\partial_{\bar{z}}\ln K(z) $ 
is smooth and strictly positive. 
\par
Furthermore, $\mu_N$ is a Radon measure of total mass $\leq N\mathrm{e}^{-N^2}$, 
i.e. $ |\langle \mu_N, \varphi \rangle | \leq N\mathrm{e}^{-N^2}\|\varphi\|_{\infty}$. 
\end{theo}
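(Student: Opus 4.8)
The plan is to follow the strategy of \cite{Vo14}, reducing the computation of the first intensity measure to an analysis of the logarithmic potential of the determinant $\det(P_\delta - z)$. The starting point is the standard identity that, for $\varphi \in \mathcal{C}_0$,
\begin{equation*}
  \mathds{E}\left[\sum_{\lambda \in \sigma(P_\delta)} \varphi(\lambda)\right] = \frac{1}{2\pi}\int \Delta\varphi(z)\, \mathds{E}\left[\ln|\det(P_\delta - z)|\right] L(dz),
\end{equation*}
so everything hinges on a sufficiently precise asymptotic expansion of $\mathds{E}[\ln|\det(P_\delta - z)|]$ for $z$ in the annular region $\mathring{\Sigma}_{r_0 - 1/N}\setminus\Sigma_{r_1}$. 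Since $P - z$ is invertible there (its eigenvalues \eqref{spnp.12} lie on the focal segment), I would write $\det(P_\delta - z) = \det(P - z)\det(1 + \delta(P-z)^{-1}Q_\omega)$ and expand. The key algebraic input is an explicit formula for the resolvent $(P-z)^{-1}$ of the bi-diagonal Toeplitz matrix; its entries are expressible through the solutions $\zeta_\pm(z)$ of $p_{a,b}(\zeta) = z$, which is exactly where the kernel $K(z) = \sum_k |(\zeta_-^{k+1} - \zeta_+^{k+1})/(a(\zeta_- - \zeta_+))|^2$ enters — $K(z)$ should be (up to normalization) the squared Hilbert–Schmidt norm of a suitable column/row of $(P-z)^{-1}$, or equivalently $\|(P-z)^{-1}e\|^2$ for an appropriate vector.

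The main computation is then a Gaussian expectation. For a complex Gaussian matrix $Q_\omega$ and a fixed matrix $A = \delta(P-z)^{-1}$, one needs $\mathds{E}[\ln|\det(1 + AQ_\omega)|]$. When $\|A\|$ is small this can be Taylor-expanded: $\ln|\det(1+AQ)| = \Re\tr(AQ) - \frac12\Re\tr((AQ)^2) + \dots$, and one takes expectations term by term, using that $\mathds{E}[Q_{jk}] = 0$, $\mathds{E}[Q_{jk}\overline{Q_{lm}}] = \delta_{jl}\delta_{km}$. The leading nonzero contribution comes from the second order term, giving something like $-\frac12 \mathds{E}[\Re\tr((AQ)^2)] + \mathds{E}[\ldots]$; more precisely the relevant quantity that survives is $\tfrac12 \mathds{E}\,\tr(A^*A |Q|^2$-type$)$, which produces $\|A\|_{\HS}^2 = \delta^2 \|(P-z)^{-1}\|_{\HS}^2$. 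One must check that $\|(P-z)^{-1}\|_{\HS}^2$, suitably interpreted, is comparable to $N K(z)$ or to $\partial_z \partial_{\bar z}$ of something, and more importantly that the $\partial_z\partial_{\bar z}$ applied under the integral converts this into $\frac{2}{\pi}\partial_z\partial_{\bar z}\ln K(z)$. I expect the cleaner route is: show $\mathds{E}[\ln|\det(P_\delta - z)|] = \ln|\det(P-z)| + \tfrac12 \ln(\text{something involving } \delta^2 \text{ and the resolvent}) + (\text{small})$, where the "something" has logarithm asymptotic to $\ln K(z) + (\text{terms harmonic in } z)$; the harmonic terms drop out under $\Delta$, leaving the stated density.

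The error terms require care and are the second main point. One needs: (i) control of the tail/remainder in the Taylor expansion of $\ln|\det(1+AQ)|$, valid off an exceptional set of $\omega$ of probability $\lesssim \mathrm{e}^{-N^2}$ (this is the origin of $\mu_N$ — it absorbs the contribution of the bad event where $\|Q_\omega\|$ is atypically large, and the crude bound $\|P_\delta\| \lesssim$ (something)$^N$ on all of $\C$ gives the $N\mathrm{e}^{-N^2}$ mass); (ii) quantitative bounds showing the correction to $\xi$ is $\mathcal{O}(N|\zeta_-|^{N-1}\delta^{-1}(1-|\zeta_-|)^2 + \delta N^3)$, where the first summand measures how close $z$ is to the spectrum of $P$ on the focal segment (so that $\det(P-z)$ is not too small — note $|\zeta_-(z)| \to 1$ as $z$ approaches the outer ellipse $\Sigma_1$, and near the focal segment $|\zeta_-|$ is bounded below, making $r_0^{N-1}$-type quantities small), and the second from the truncation of the Neumann/Taylor series and the trace estimates. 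The hard part will be establishing the precise asymptotics of $\mathds{E}[\ln|\det(1 + \delta(P-z)^{-1}Q_\omega)|]$ with errors uniform in $z$ over the annulus, simultaneously keeping $\det(P-z)$ under control near the focal segment and keeping the perturbative expansion valid near the outer ellipse where $(P-z)^{-1}$ is large; balancing these two competing constraints is exactly what dictates the parameter range \eqref{eq1.1}. Once the expansion of $\mathds{E}[\ln|\det(P_\delta-z)|]$ is in hand, applying $\frac{1}{2\pi}\Delta$ (in the distributional sense, integrated against $\varphi$), discarding harmonic pieces, and invoking smoothness and strict positivity of $\partial_z\partial_{\bar z}\ln K(z)$ (which I would verify directly from the explicit sum defining $K$) finishes the proof.
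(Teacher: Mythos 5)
Your overall route is genuinely different from the paper's: you propose the logarithmic-potential method, applying $\frac{1}{2\pi}\Delta_z$ to $\mathds{E}[\ln|\det(P_\delta-z)|]$, whereas the paper never expands a determinant directly. It instead imports from \cite{SjVo15b} a Grushin-type reduction producing a holomorphic \emph{effective Hamiltonian} $g(z,Q)=g_0(z)-\delta(Q|\overline Z)+T$ whose zero set in $\Omega\times B(0,C_1N)$ coincides with the eigenvalues, and then integrates over the zero hypersurface $\Gamma=\{g=0\}$ via the co-area formula (as in \cite{Vo14,SjVo15}), choosing $z$-adapted orthonormal coordinates and computing the Jacobian $J(f)$ explicitly. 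The density $\frac2\pi\partial_z\partial_{\bar z}\ln K$ emerges from $|Z|^2=K_N(z)$ and from Proposition~\ref{prop10.2}, not from a determinant expansion.

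There is a real gap in the step you lean on. You write that ``when $\|A\|$ is small'' one can Taylor-expand $\ln|\det(1+AQ)|$ with $A=\delta(P-z)^{-1}$, but in the theorem's parameter range $A$ is emphatically \emph{not} small: the whole point of condition \eqref{eq1.1} (and its consequence \eqref{eq10.6}, $|g_0|\ll\delta NF_N(|\zeta_-|^2)$) is that the perturbation dominates the smallest singular value of $P-z$, i.e.\ $\delta\|(P-z)^{-1}\|\gg1$. In that regime the Neumann/Taylor series for $\ln\det(1+AQ)$ does not converge and a few terms cannot capture the answer. Worse, the specific term you single out vanishes: for $\mathcal N_\C(0,1)$ entries $\mathds{E}[Q_{jk}Q_{lm}]=0$, so $\mathds{E}[\tr((AQ)^2)]=0$ and your claimed leading contribution is identically zero. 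The structure that actually produces $\ln K(z)$ is a rank-one effective determinant: $P-z$ has exactly one exponentially small singular value in the annulus, and along that direction $\delta(Q|\overline Z)$ is a centered complex Gaussian of variance $\delta^2|Z|^2=\delta^2K_N(z)$, so $\mathds{E}\ln|g_0-\delta(Q|\overline Z)|\approx\ln(\delta K_N^{1/2})+\text{const}$ once $|g_0|\ll\delta|Z|$, and $\Delta_z$ of that gives the stated density. Your ``cleaner route'' sentence gestures at this, but you would need to \emph{construct} this scalar reduction (exactly the content of \eqref{eq10.2}--\eqref{eq10.3.5}) before any expectation of a logarithm is tractable; the generic Taylor expansion you start from cannot substitute for it. Uniform error control to the precision of \eqref{eq2} also requires quantifying $T$, $\partial_z f$, $\partial_{\bar z}f$ and the Jacobian terms as in \eqref{eq10.59}--\eqref{eq10.72.1}, which your sketch does not address.
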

Let us give some remarks on this result. We will show in Section \ref{sizz} that 
for $p(\zeta_{\pm})=z\in\mathring{\Sigma}_1\backslash [-2\sqrt{ab},2\sqrt{ab}]$ 
we have that $|\zeta_+| < |b/a|^{1/2} < |\zeta_-| <1$. In fact we have that 
$|\zeta_-| \leq r_0$ when $z\in\Sigma_{r_0}\backslash [-2\sqrt{ab},2\sqrt{ab}]$. 
\\
\par
Secondly, for $r_0$ satisfying the first condition in 
\eqref{eq1.1}, the function $[0,r_0]\ni r \mapsto r^{N-1}(1-r)^2$ is increasing. Hence, 
the error term in \eqref{eq2} is small, since it is dominated by the term in the second 
line of \eqref{eq1.1}. More precisely, it satisfies for $|\zeta_-|\leq r_0$
	\begin{equation*}
	\frac{N |\zeta_-|^{N-1}}{\delta }(1-|\zeta_-|)^2+ \delta N^3
	\leq
	\frac{Nr_0^{N-1}}{\delta}(1-r_0)^2 +\delta N^3.
	\end{equation*}
Theorem \ref{thm1} shows that in the interior of the ellipse $p(\mathbf{R})$ (see 
Figure \ref{fig1}) there is a non-vanishing continuous density of eigenvalues 
whose leading term is independent of the dimension $N$ and depends only 
the symbol $p$. 
\par
Furthermore, we note that the leading term of the density $\xi$ is related to 
the Edelman-Kostlan formula (see for example \cite{HoKrPeVi09}) 
for the average density of the zeros of a Gaussian analytic function $g(z)$, 
in the sense of \cite{HoKrPeVi09}, with covariance kernel $K(z)$, i.e.
	\begin{equation*}
		\mathds{E}[g(z)\overline{g(z)}] = K(z).
	\end{equation*}
The above theorem, together with the result of \cite{SjVo15b}, is a generalisation 
of the work done in the case where the unperturbed operator $P$ is given by 
a large Jordan block, i.e. the case where $a=1$, $b=0$. This has already 
been subject to intense study : 
M.~Hager and E.B.~Davies \cite{DaHa09} showed that with a sufficiently small 
coupling constant 
most eigenvalues of $P_{\delta}$ can be found near a circle, with probability close to $1$, as 
the dimension of the matrix $N$ gets large. This result has been refined by one of the 
authors in \cite{Sj15}, showing that, with probability close to $1$, most eigenvalues 
follow an angular Weyl law. Furthermore, M.~Hager and E.B.~Davies \cite{DaHa09} give a probabilistic upper bound of order $\log N$ for the number of eigenvalues in the interior of a circle. 
\par
A recent result by A.~Guionnet, P.~Matched Wood and 
O.~Zeitouni \cite{GuMaZe14} implies that when the coupling constant is 
bounded from above and from below by (different) sufficiently negative powers of $N$, then 
the normalized counting measure of eigenvalues of the randomly perturbed Jordan block  converges weakly in probability to the uniform measure on $S^1$ as the dimension of the 
matrix gets large. 
\par
In \cite{SjVo15}, the authors show that in the case where $P$ is given by 
a Jordan block matrix, the leading term of the average density of eigenvalues 
is given by the density of the hyperbolic volume on the unit disk. 
\par
A similar result has been obtained by C.~Bordenave and 
M.~Capitaine in \cite{BoCa16}, where they allow for a more general class 
of random matrices, however, with slower decay of the coupling 
constant, as $N\gg 1$. In particular they show that the point process 
$\Xi$ converges weakly inside some disc, in the limit $N\to\infty$, to 
the point process given by 
the zeros of a certain Gaussian analytic function (in the sense 
of \cite{HoKrPeVi09}) on the Poincar\'e disc.
%
\\[2ex]
\textbf{Acknowledgements.} M.~Vogel was supported by the project  
GeRaSic ANR-13-BS01-0007-01.%
\section{Image of the symbol $p$}\label{sizz}
\setcounter{equation}{0}
It will be important to understand 
the solutions of the characteristic equation $p(\xi)=z$. The discussion 
that follows has been taken from \cite{SjVo15b} and is presented here  
for the reader's convenience. 
\\
\par
We recall that we have assumed for simplicity that
$|a|\ge |b|$. The case $|a|=|b|$ will be obtained as a limiting case
of the one when $|a|>|b|$, that we consider now. We write the symbol 
$p$ \eqref{int.5} in the form
$$
f_{a,b}(\zeta )=a\zeta +b/\zeta, \quad \zeta=\mathrm{e}^{i\xi}, 
$$
and observe that when $r>0$
$$
f_{a,b}(\partial D(0,r))=f_{ar,b/r}(\partial D(0,1))
$$
which gives a family of confocal ellipses $E_r$. The length of the
major semi-axis of $E_r$ is equal to $|a|r+|b|/r=:g(r)$. $E_{r_1}$ is contained in the bounded domain which has $E_{r_2}$
as its boundary, precisely when $g(r_1)\le g(r_2)$. The function $g$
has a unique minimum at $r=r_\mathrm{min}=(|b|/|a|)^{1/2}$. $g$ is
strictly decreasing on $]0,r_\mathrm{min}]$ and strictly increasing on
$[r_\mathrm{min},+\infty [ $. It tends to $+\infty $ when $r\to 0$ and
when $r\to +\infty $. We have
$g_\mathrm{min}=g(r_\mathrm{min})=2(|a||b|)^{1/2}$ so
$E_{r_\mathrm{min}}$ is just the segment between the two focal points,
common to all the $E_r$. For $r\ne r_\mathrm{min}$, the map $\partial
D(0,r)\to E_r$ is a diffeomorphism. Let $r_1$ be the unique value in
$]0,1[$ for which $g(r_1)=|a|+|b|=g(1)$. We get the following result:
\begin{prop}\label{sizz1} Let $|b|<|a|$. 
\begin{itemize}
\item
When $z$ is strictly inside the ellipse $E_1$ described above, 
then both solutions of $f_{a,b}(\zeta )=z$ belong to $D(0,1)$.
\item When $z$ is on the ellipse, one solution is on $S^1$ and the
  other belongs to $D(0,1)$.
\item When $z$ is in the exterior region to the ellipse, one solution
  fulfils $|\zeta |>1$ and the other satisfies $|\zeta |<1$. 
\end{itemize}
\end{prop}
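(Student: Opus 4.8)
The plan is to turn the equation $f_{a,b}(\zeta)=z$ into a quadratic and to count its roots in the open unit disc $D(0,1)$ by the argument principle. Multiplying $f_{a,b}(\zeta)=z$ by $\zeta\neq 0$ gives $a\zeta^2-z\zeta+b=0$; since $a,b\neq 0$ this has exactly two roots $\zeta_1,\zeta_2$ in $\C\setminus\{0\}$, and Vieta's formulas give $\zeta_1\zeta_2=b/a$ and $\zeta_1+\zeta_2=z/a$. In particular $|\zeta_1|\,|\zeta_2|=|b|/|a|=r_\mathrm{min}^2<1$, so at most one of the two roots can have modulus $\geq 1$; moreover, if one root has modulus $>1$ the other has modulus $<1$, and if one root lies on $S^1$ the other has modulus $|b|/|a|<1$. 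This already yields one implication in each of the three cases, so it remains to decide which of the three alternatives actually occurs for a given $z$.

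For this I would introduce $N(z)$, the number of roots of $a\zeta^2-z\zeta+b$ in $D(0,1)$, counted with multiplicity. By the discussion preceding the proposition, $f_{a,b}$ restricts to a diffeomorphism of $S^1$ onto $E_1$ (here $r=1\neq r_\mathrm{min}$ because $|b|<|a|$); hence the quadratic has a root on $S^1$ precisely when $z\in E_1$, and for $z\in\C\setminus E_1$ the argument principle gives
\begin{equation*}
N(z)=\frac{1}{2\pi i}\int_{|\zeta|=1}\frac{2a\zeta-z}{a\zeta^2-z\zeta+b}\,d\zeta .
\end{equation*}
The integrand is jointly continuous in $(\zeta,z)$ on $\{|\zeta|=1\}\times(\C\setminus E_1)$, so $N$ is a continuous, integer-valued function on $\C\setminus E_1$, hence constant on each of the two connected components of $\C\setminus E_1$, namely the interior and the exterior of the Jordan curve $E_1$.

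It then remains to evaluate this constant on each component. Taking $z=0$, which lies in the interior of $E_1$, the two roots are $\pm(-b/a)^{1/2}$, of modulus $r_\mathrm{min}<1$, so $N\equiv 2$ on the interior of $E_1$, which is the first assertion. Taking $|z|$ large, so that $z$ lies in the exterior of $E_1$, the relation $\zeta_1+\zeta_2=z/a$ forces one of the roots to have large modulus, in particular $>1$, while $\zeta_1\zeta_2=b/a$ then forces the other to have modulus $<1$; thus $N\equiv 1$ on the exterior of $E_1$, and since no root lies on $S^1$ when $z\notin E_1$, for every such $z$ exactly one root has modulus $<1$ and the other modulus $>1$ --- the third assertion. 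Finally, for $z\in E_1$ the diffeomorphism $f_{a,b}\colon S^1\to E_1$ gives exactly one root $\zeta_0\in S^1$, and then $\zeta_1\zeta_2=b/a$ forces the remaining root to have modulus $|b|/|a|<1$; this is the second assertion.

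I expect no real obstacle here. The only external ingredient is that $f_{a,b}$ maps $S^1$ diffeomorphically onto $E_1$, which is used both to identify ``$z$ on the ellipse'' with ``a root on $S^1$'' and to treat the boundary case, and it has been established in the paragraphs preceding the proposition; everything else is the argument principle together with Vieta's formulas, the large-$|z|$ behaviour being read off directly from $\zeta_1+\zeta_2=z/a$ and $\zeta_1\zeta_2=b/a$ without any expansion of the square root. The only routine point to check is that $\C\setminus E_1$ has exactly two connected components, the bounded and the unbounded one, which is immediate since $E_1$ is a Jordan curve.
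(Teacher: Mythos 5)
Your proof is correct, but it takes a genuinely different route from the paper's. The paper treats the proposition as an immediate consequence of the confocal foliation set up in the preceding paragraph: one observes that $f_{a,b}(\partial D(0,r)) = E_r$ with $g(r) = |a|r + |b|/r$ strictly decreasing on $]0, r_{\min}]$ and strictly increasing on $[r_{\min},\infty[$, and that by Vieta the two roots lie on circles of radii $r$ and $r_{\min}^2/r$ with $g(r) = g(r_{\min}^2/r)$; the three alternatives then fall out directly from $g(r) \lessgtr g(1)$ according as $z$ is inside, on, or outside $E_1$. You instead keep only the product relation $\zeta_1\zeta_2 = b/a$, define the root count $N(z)$ in $D(0,1)$ via the argument principle, note it is locally constant on $\C\setminus E_1$, and pin down the constant on each component by two test evaluations ($z=0$ and $|z|$ large), handling the boundary case separately via the diffeomorphism $f_{a,b}\colon S^1 \to E_1$. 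Both arguments are sound. The paper's version is shorter given the preparation already in place, and in fact yields a bit more --- it locates $|\zeta_\pm|$ exactly on the circles $r$ and $r_{\min}^2/r$, which is what is actually used later in \eqref{signcon2} --- whereas your argument-principle version is more self-contained and requires less of the foliation picture (essentially just that $f_{a,b}(S^1) = E_1$ with no root of the quadratic on $S^1$ when $z\notin E_1$). One small point worth making explicit in your write-up: $\zeta=0$ is never a root because $b\neq 0$, so multiplying by $\zeta$ to form the quadratic loses nothing and the argument-principle contour integral is over a circle avoiding all zeros of the denominator.
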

\par In the case $|a|=|b|$, $E_1$ is just the segment between the two
focal points. In this case $r_\mathrm{min}=1$ and we get:
\begin{prop}\label{sizz2}
Assume that $|a|=|b|$. \begin{itemize} 
\item If $z\in E_1$ then both solutions of
$f_{a,b}(\zeta )=z$ belong to $S^1$.
\item If $z$ is outside $E_1$, one solution is in $D(0,1)$ and the
  other is in the complement of $\overline{D(0,1)}$.
\end{itemize}
\end{prop}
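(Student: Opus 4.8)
The plan is to reduce the statement to the elementary observation that the product of the two roots of the quadratic equation equivalent to $f_{a,b}(\zeta)=z$ has modulus $1$ when $|a|=|b|$. First I would rewrite the characteristic equation: since $b\neq 0$, $\zeta=0$ is never a solution, and for $\zeta\neq 0$ the equation $f_{a,b}(\zeta)=a\zeta+b/\zeta=z$ is equivalent to $a\zeta^2-z\zeta+b=0$. This is a genuine quadratic because $a\neq 0$, so it has exactly two roots $\zeta_+,\zeta_-$ counted with multiplicity, and Vieta's formula gives $\zeta_+\zeta_-=b/a$. Under the hypothesis $|a|=|b|$ this means $|\zeta_+|\,|\zeta_-|=1$, whence exactly one of two alternatives holds: either $|\zeta_+|=|\zeta_-|=1$, or one modulus is $<1$ and the other is $>1$. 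This is the basic dichotomy, and it only remains to match the two alternatives with the position of $z$ relative to $E_1$.

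Next I would invoke the discussion preceding the proposition: when $|a|=|b|$ we have $r_{\mathrm{min}}=(|b|/|a|)^{1/2}=1$, so $E_1=E_{r_{\mathrm{min}}}=f_{a,b}(S^1)$ is exactly the focal segment $[-2\sqrt{ab},2\sqrt{ab}]$. Hence $z\in E_1$ if and only if there exists $\zeta_0\in S^1$ with $f_{a,b}(\zeta_0)=z$, i.e. if and only if at least one of the roots $\zeta_\pm$ lies on $S^1$. Combining this with the dichotomy: if $z\in E_1$, then some root lies on $S^1$, which via $|\zeta_+|\,|\zeta_-|=1$ forces the other root onto $S^1$ as well, giving the first bullet. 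Conversely, if $z$ lies outside $E_1$ — note that, the segment having empty interior in $\C$, ``outside'' simply means $z\notin E_1$ and the complement is connected — then neither root is on $S^1$, so by the dichotomy one root lies in $D(0,1)$ and the other in $\C\setminus\overline{D(0,1)}$, which is the second bullet.

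Finally I would check the degenerate case where the two roots coincide: the discriminant $z^2-4ab$ vanishes precisely at the focal points $z=\pm 2\sqrt{ab}$, and then the double root is $\zeta=z/(2a)=\pm\sqrt{b/a}$, of modulus $|b/a|^{1/2}=1$, consistent with the first bullet (both solutions on $S^1$, with multiplicity two). Alternatively, as the authors announce, Proposition~\ref{sizz2} can be deduced from Proposition~\ref{sizz1} by letting $|b|\uparrow|a|$ and using continuity of the roots of the quadratic in $(a,b,z)$; but the direct argument above is self-contained. I do not expect any serious obstacle: the only points requiring minor care are the correct reading of ``$z$ outside $E_1$'' and the treatment of multiplicities at the focal points.
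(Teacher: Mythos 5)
Your proof is correct and complete. The paper does not spell out a separate proof of Proposition~\ref{sizz2}: it presents it as an immediate consequence of the preceding geometric discussion of the family of confocal ellipses $E_r$ and the function $g(r)=|a|r+|b|/r$ (noting that when $|a|=|b|$ one has $r_{\mathrm{min}}=1$ so $E_1$ degenerates to the focal segment), and it also flags at the outset of Section~\ref{sizz} that this case can be obtained as a limit of Proposition~\ref{sizz1} as $|b|\uparrow|a|$. Your argument replaces that implicit geometric reasoning with the purely algebraic observation $|\zeta_+\zeta_-|=|b/a|=1$ from Vieta's formula, combined with the identification $E_1=f_{a,b}(S^1)$; this is essentially the relation the authors record in Remark~\ref{remh1} (which appears in the paper only \emph{after} the proposition), so your route is logically the same fact exploited more directly. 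Your version is shorter and self-contained, whereas the paper's geometric framing buys a uniform treatment of all the ellipses $E_r$ at once, which the rest of the paper relies on anyway. Your check at the double-root points $z=\pm 2\sqrt{ab}$ and the remark that the focal segment has empty interior (so ``outside'' is unambiguous) are both appropriate refinements that the paper leaves tacit.
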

\begin{remark}\label{remh1}
Assuming that $0<|b|\leq |a|$, we observe that for 
$z\in\C$ the two solutions, say $\zeta_{\pm}$ of $f_{a,b}(\zeta)=z$ 
are solutions of the equation 
\begin{equation}
	\zeta^2 -\frac{z}{a}\zeta + \frac{b}{a}=0,
\end{equation}
and they satisfy the relations 
\begin{equation}\label{algrel}
	 \zeta_+\zeta_- = \frac{b}{a}, 
	 \quad 
	 \zeta_+ + \zeta_- = -\frac{z}{a}.
\end{equation}
Furthermore, we can fix a branch of the square root such that 
$\zeta_+(z)$ and $\zeta_-(z)$ are holomorphic functions of $z$ 
in $\C\backslash [-2\sqrt{ab},2\sqrt{ab}]$.
\end{remark}
Throughout this text, we will work with the convention that 
\begin{equation}\label{signcon}
	|\zeta_+| \leq |\zeta_-|
\end{equation}
which in particular yields by the above discussion that when 
$z$ is inside $E_r$, for $r\in [r_{\mathrm{min}}, +\infty[$, then 
\begin{equation}\label{signcon2}
	0< |\zeta_+| \leq \sqrt{|b/a|} \leq |\zeta_-| \leq r.
\end{equation}
\section{Preparations for the density of eigenvalues in the interior}
In this section we are interested in the density of eigenvalues in the 
interior of the ellipse $p_{a,b}(\mathbf{R})$, where $p_{a,b}=p$ denotes the 
principal symbol of the unperturbed operator $P$, cf. \eqref{int.3}, \eqref{int.5}. 
We study the first moment of linear statistics of the point process 
given by the eigenvalues of $P_{\delta}$, see \eqref{int.5a}, i.e.
\begin{equation}\label{eq10.60}
 I_{\varphi} = \mathds{E} \left[ \sum_{\lambda\in\sigma(P_{\delta})}\varphi(z)
 \right], \quad 
 \varphi \in \mathcal{C}_0(\Omega),
\end{equation}
where $\Omega$ is some open subset in the interior of 
$\mathrm{conv}(p_{a,b}(\mathbf{R}))\backslash[-2\sqrt{ab},2\sqrt{ab}]$, 
where $\mathrm{conv}(\cdot)$ denotes the convex 
hull of a set.
\par
W.~Bordeaux-Montrieux \cite{BM} noted that the Markov inequality implies 
that if $C_1>0$ is large enough, then 
for the Hilbert-Schmidt norm of $Q_{\omega}$ (as in \eqref{int.5a}), 
\begin{equation}\label{grpp.0b}
\mathds{P}\left[ 
\Vert Q_{\omega}\Vert_\mathrm{HS}\le C_1N
\right] \ge 1-e^{-N^2}.
\end{equation}
Since the number of eigenvalues of $P_{\delta}$ in the support 
of $\varphi$ is bounded from above by $N$, it follows from \eqref{grpp.0b} that 
\begin{equation}\label{eq10.61}
 \begin{split}
 &I_{\varphi} = \mathds{E} \left[ \mathds{1}_{B_{\C^{N^2}}(0,C_1N)}(Q) \sum_{\lambda\in\sigma(P_{\delta})}\varphi(z)
 \right] + \langle \mu_N, \varphi \rangle, \\
& |\langle \mu_N, \varphi \rangle | \leq N\mathrm{e}^{-N^2}\|\varphi\|_{\infty}.
 \end{split}
\end{equation}
Here, we identify the random matrix $Q_{\omega}$ (cf \eqref{int.5a}) with 
a random vector $Q\in\C^{N^2}$. Furthermore, $\mu_N$ is a Radon measure 
of total mass $\leq N\mathrm{e}^{-N^2}$. 
\par
After the reduction to \ref{eq10.61}, it is sufficient to work with the assumption 
that the random vector $Q$ is restricted to a ball of radius $C_1N$, i.e.
	\begin{equation}
		\|Q\|_2\leq C_1 N.
	\end{equation}
Note that this assumption is equivalent, to the assumption that the Hilbert-Schmidt 
norm of the random matrix $Q_{\omega}$ is bounded, more precisely that 
	\begin{equation}\label{eqa1}
		\|Q\|_{HS}\leq C_1 N.
	\end{equation}
Next, we define for $r>0$ 
\begin{equation}\label{eq10.1}
	\Sigma_{r}:= \mathrm{conv}(p_{ar,br^{-1}}(\mathbf{R})).
\end{equation}
We let
\begin{equation}\label{eq10.1.5}
	\Omega\Subset\mathring{\Sigma}_1\backslash 
	[-2\sqrt{ab},2\sqrt{ab}], 
\end{equation}
be open, relatively compact and connected. It 
may depend on $N$ (to be specified later on) but will 
avoid a fixed neighbourhood of the focal segment. Moreover, 
let $W=B(0,C_1N)$ for $C_1>0$ large enough such that 
\eqref{grpp.0b} holds. By remark \ref{remh1} we see that by excluding 
the focal segment in \eqref{eq10.1.5} we have that $\zeta_{\pm}(z)$, 
the solutions to the characteristic equation, given by the symbol \eqref{int.5}, 
\begin{equation*}
	a\zeta + b\zeta^{-1} = z,
\end{equation*}
are holomorphic functions of $z$,. 
\\
\par
In the following we write for $\mu\in\mathbf{N}$
\begin{equation}\label{eq10.2.1}
	F_{\mu+1}(t) = 1 + t + \dots + t^{\mu}
	, \quad 0\leq t \leq 1.
\end{equation}
As in \cite{SjVo15b}, we work under the hypothesis that 
\begin{equation}\label{grpp.1}
 \delta N F_N(|\zeta_-|) \ll 1.
\end{equation}
Notice that this is fulfilled for all $z$ inside $E_1=p(\mathbf{R})$, if we make 
the even stronger assumption 
\begin{equation}\label{grpp.2}
 \delta N^2 \ll 1.
\end{equation}
(Recall that $N\gg 1$). 
We have shown in \cite{SjVo15b} that assuming \eqref{grpp.1}, \eqref{eqa1}
we can identify the eigenvalues of 
$P_{\delta}$ in $\Omega$ with the zeros of $g(z,Q)$, a holomorphic function 
on $\Omega\times W$. Note that since there are at most $N$ eigenvalues, 
we have for every $Q\in W$ that $g(\cdot,Q)\not\equiv 0$. Furthermore, 
see \cite[Formula (8.18)]{SjVo15b}, $g$ is given by
\begin{equation}\label{eq10.2}
g(z,Q) = g_0(z) - \delta (Q|\overline{Z})+ T(z,Q;\delta,N),
\end{equation}
where $Z$ is given by 
\begin{equation}\label{grpp.14}
\begin{split}
 Z &= \left( 
 \frac{\zeta_+^{N+1-j} - \zeta_-^{N+1-j}}{a(\zeta_+ - \zeta_-)} 
 \frac{\zeta_+^{k} - \zeta_-^{k}}{a(\zeta_+ - \zeta_-)} 
 \right)_{1\leq j,k\leq N} \\
 &=\left( a^{-2}
 F_{N+1-j}(\zeta_+/\zeta_-)F_{k}(\zeta_+/\zeta_-) \zeta_-^{N-j+k-1}
 \right)_{1\leq j,k\leq N},
 \end{split}
\end{equation}
and
\begin{equation}\label{eq10.3}
 g_0(z) = \frac{\zeta_-^{N+1} - \zeta_+^{N+1}}{a(\zeta_- - \zeta_+)}
 	    = \frac{\zeta_-^N}{a} F_{N+1}(\zeta_+/\zeta_-).
\end{equation}
Moreover, 
\begin{equation}\label{eq10.3.5}
|T(z,Q)|=|T(z,q;\delta,N)|=\mathcal{O}(1)(\delta N F_N(|\zeta_-|^2))^2.
\end{equation}
We will frequently write $|\cdot |$ for the Hilbert-Schmidt norm and, 
until further notice, we write $F_{\mu}=F_{\mu}(\zeta_+/\zeta_-)$. 
By \eqref{grpp.14}, we get that 
\begin{equation}\label{eq10.3.4}
 |Z| =|a|^{-2} \left(\sum_{j,k=1}^N|\zeta_-|^{2(N-j+k-1)}|F_{N+1-j}|^2|F_{k}|^2\right)^{\frac{1}{2}}
 = |a|^{-2}\sum_{\mu =0 }^{N-1}|\zeta_-|^{2\mu}|F_{\mu+1}|^2.
\end{equation}
For $z\in\Omega$ we have $|\zeta_+|/|\zeta_-|\leq C < 1$ and hence 
$|F_k(\zeta_+/\zeta_-)|\asymp 1$. If we also assume $z\in\Sigma_{r_0}$, 
$0<r_0\leq 1-1/N$, then 
\begin{equation}\label{eq10.3.6}
 |Z| \asymp F_N(|\zeta_-|^2) \asymp \frac{1}{1-|\zeta_-|^2} \asymp \frac{1}{1-|\zeta_-|}, 
\end{equation}
where we used as well that $\sqrt{|b/a|} \leq |\zeta_-| \leq 1- 1/N$ 
(see \eqref{signcon2},\eqref{eq10.8}, \eqref{eq10.9}), and that 
	\begin{equation}
		F_N(|\zeta_-|^2) = \frac{1}{1-|\zeta_-|^2}(1 - |\zeta_-|^{2(N+1)})
		\asymp \frac{1}{1-|\zeta_-|^2}.
	\end{equation}
Recall that $\Omega$ in \eqref{eq10.1.5} avoids a fixed neighborhood of 
the focal segment of the ellipse $E_1=p(\mathbf{R})$. More precisely, in 
view of the discussion in Section \ref{sizz}, we assume that 
\begin{equation}\label{eq10.4}
	\begin{cases}
	\Omega\Subset\mathring{\Sigma}_{1}\backslash \Sigma_{r_1}, \\
	r_1= \sqrt{|b/a|} + 1/C, ~ C\gg 1.
	\end{cases}
\end{equation}
%
Using \eqref{eq10.4}, it follows that the middle term in \eqref{eq10.2} 
is bounded in modulus by
\begin{equation}\label{eq10.5}
	\delta |Q| |Z| \leq \mO(1)(C_1\delta N F_N(|\zeta_-|^2))
\end{equation}
where we assumed that $|Q|\leq C_1 N$ (cf. \eqref{grpp.1}). Moreover, we 
assume that the 
first term in \eqref{eq10.2} is smaller than the bound on the middle term, i.e.
\begin{equation}\label{eq10.6}
	|g_0(z)| \ll C_1\delta N F_N(|\zeta_-|^2).
\end{equation}
Using that $|F_k(\zeta_+/\zeta_-)|\asymp 1$, we see that \eqref{eq10.6} is 
implied by the assumption 
\begin{equation}\label{eq10.7}
	|\zeta_-|^N \ll C_1\delta N F_N(|\zeta_-|^2).
\end{equation}
More precisely, we will assume that $z$ satisfying \eqref{eq10.4} is 
such that $\zeta_-(z)\in D(0,r_0)$ with 
\begin{equation}\label{eq10.8}
	|r_0|^N \ll C_1\delta N F_N(r_0^2), \quad r_0\leq 1 - \frac{1}{N}.
\end{equation}
Observe that the function $r^N/F_N(r^2)$ is strictly monotonically growing on 
the interval $[0,1-N^{-1}]$. Thus, the inequality \eqref{eq10.7} 
is preserved if we replace $r_0$ by $|\zeta_-|$, for $|\zeta_-|\leq r_0$.
\par
Combining the assumptions \eqref{eq10.4} and \eqref{eq10.7}, we get 
\begin{equation}\label{eq10.9}
	\begin{cases}
	z\in\Omega\Subset\Sigma_{r_0,r_1}:=\mathring{\Sigma}_{r_0}\backslash \Sigma_{r_1}, \\
	r_0>0 \text{ satisfies  \eqref{eq10.8}}, \\
	r_1= \sqrt{|b/a|} + 1/C, ~ C\gg 1.
	\end{cases}
\end{equation}
By \eqref{grpp.1}, we see that the bound on $T$ is much smaller than the 
upper bound on the middle term in \eqref{eq10.2}, i.e.
\begin{equation}\label{eq10.10}
	(\delta NF_{N+1}(|\zeta_-|^2))^2 \ll \delta NF_{N}(|\zeta_-|^2)
\end{equation}
Here we used as well that $F_{N+1}(|\zeta_-|^2) \asymp F_{N}(|\zeta_-|^2)$. 
From \eqref{eq10.2}, \eqref{eq10.3.5} and the Cauchy inequalities, we 
get 
\begin{equation}\label{eq10.11}
	d_Qg(z,Q) = -\delta Z\cdot dQ + \mO(\delta^2F_{N+1}^2(|\zeta_-|^2) N)
\end{equation}
where the norm of the first term is 
$\asymp \delta |Z| \asymp \delta F_N(|\zeta_-|^2) \gg 
\delta^2F_{N+1}^2(|\zeta_-|^2)  N$. Here, we used \eqref{grpp.1}, 
\eqref{eq10.3.6}. Technically, we need to apply the Cauchy inequalities 
in a ball of radius $\eta C_1N$ for some $0<\eta<0$, but we have 
room for that if we choose $C_1$ in \eqref{grpp.1} slightly larger 
to begin with. 
\par
Recall that for every $Q\in W$, $g(\cdot,Q)\not\equiv 0$. It has then been shown 
in \cite{Vo14,SjVo15}, that if 
\begin{equation*}
	g(z,Q) = 0 \Rightarrow
	d_Qg(z,Q) \neq 0
\end{equation*}
then 
\begin{equation}\label{eq10.11.1}
	\Gamma := \left\{ 
	(z,Q)\in\Omega\times W; g(z,Q)=0
	\right\}
\end{equation}
is a smooth complex hypersurface in $\Omega\times W$ and 
\begin{equation}\label{eq10.11.2}
K_{\varphi} =
\mathds{E} \left[ \mathds{1}_{B(0,C_1N)}(Q) \sum_{\lambda\in\sigma(P_{\delta})}\varphi(z)
 \right] 
 =
 \int_{\Gamma}\varphi(z)\e^{-Q^*Q}\,\frac{j^*(d\overline{Q}\wedge dQ)}{(2i)^{N^2}},
\end{equation}
where $j^*$ denotes the pull-back by the regular embedding 
$j:\Gamma \to \Omega\times W$ and 
$$
	d\overline{Q}\wedge dQ = d\overline{Q}_1\wedge dQ_1 \wedge \dots 
	d\overline{Q}_N\wedge dQ_N,
$$
which is a complex $(N^2,N^2)$-form on $\Omega\times W$. Thus, 
$(2i)^{-N^2}j^*(d\overline{Q}\wedge dQ)$ is a non-negative differential form on $\Gamma$ 
of maximal degree.
\\
\par
Next, we identify $Z(z)$ in \eqref{grpp.14} with a vector in 
$\C^{N^2}$ and write 
\begin{equation}\label{eq10.12}
	Q = Q(\alpha) =\alpha_1\overline{Z}(z) + \alpha', \quad 
	\alpha_1\in\C, ~\alpha'\in \overline{Z}(z)^{\perp}
\end{equation}
and we identify $\overline{Z}(z)^{\perp}$ unitarily with $\C^{N^2-1}$ by 
means of an orthonormal basis $e_2(z),\dots,e_{N^2}(z)$, so that 
$\alpha'=\sum_2^{N^2} \alpha_je_j(z)$. Then, we have
	\begin{equation}\label{eq10.12.1}
	Q=Q(\alpha,z) = \alpha_1\overline{Z}(z) + \sum_2^{N^2} \alpha_je_j(z)
	\end{equation}
and we identify $g(z,Q)$ with $\tilde{g}(z,\alpha)=g(z,Q(\alpha,z))$ which is 
holomorphic in $\alpha$ for every 
fixed $z$ and, by \eqref{eq10.2}, \eqref{eq10.3.5}, we have that 
\begin{equation}\label{eq10.13}
\begin{split}
&\tilde{g}(z,\alpha) = g_0(z) -\delta |Z|^2\alpha_1 + 
T\!\left(z,\alpha_1\overline{Z}(z)+ \sum_2^{N^2}\alpha_je_j(z)\right) \\ 
&\partial_{\alpha_1}\tilde{g}(z,\alpha) = -\delta |Z|^2 + \mO(\delta^2F_{N+1}^3 N).
\end{split}
\end{equation}
In particular, by \eqref{grpp.1}, \eqref{eq10.3.6}, we see that 
\begin{equation}\label{eq10.14}
|\partial_{\alpha_1}\tilde{g}(z,\alpha)| \asymp \delta F_{N+1}^2(|\zeta_-|^2).
\end{equation}
From \eqref{eq10.13},\eqref{eq10.3.5} and the Cauchy-inequalities, we obtain 
\begin{equation}\label{eq10.15}
|\partial_{\alpha_j}\tilde{g}(z,\alpha)| =\mO(\delta^2F_{N+1}^2 N), 
\quad j=2,\dots,N^2.
\end{equation}
The Cauchy-inequalities applied to \eqref{eq10.3} together with \eqref{eq10.3.5}, 
\eqref{eq10.2} yield
\begin{equation}\label{eq10.16}
\partial_{z}g(z,Q) = \partial_z g_0(z) - \delta (Q| \overline{\partial_z Z}) 
  + \frac{\mO(1)(\delta N F_{N+1}(|\zeta_-|^2))^2}{\dist(z,\partial\overline{\Sigma}_{r_0,r_1})}
\end{equation}
with
\begin{equation}\label{eq10.17}
\partial_z g_0(z)= (\partial_z \ln \zeta_- ) \frac{\zeta_-^N}{a}\left[
NF_{N+1}(\zeta_+/\zeta_-) -2(\zeta_+/\zeta_-)F_{N+1}'(\zeta_+/\zeta_-)\right].
\end{equation}
%
Here, we used as well \eqref{algrel} which implies that 
$\partial_z (\zeta_+/\zeta_-) = -(\zeta_+/\zeta_- )\partial_z \ln \zeta_-$. 
\begin{remark}
Note that in \eqref{eq10.16} 
\begin{equation}\label{eq10.17.5}
\dist(z,\partial\overline{\Sigma}_{r_0,r_1}) \geq 
\frac{\min(r_0-|\zeta_-|, |\zeta_-|-r_1)}{C}
\geq \frac{r_0-|\zeta_-|}{C}, 
\end{equation}
for some (not necessarily equal) $C\gg 1$.
\end{remark}
For $Q$ in \eqref{eq10.12.1}, we have the following result: 
\begin{lemma}\label{lem10.1} Let $Q(\alpha)\in B(0,C_1N)$ and $z\in\Omega$ 
as in \eqref{eq10.9}. Then, 
\begin{equation}\label{eq10.18}
\begin{split}
 \partial_{z}\tilde{g}(z,\alpha) = \partial_z g_0(z) - \delta \alpha_1 \partial_z |Z|^2 
  &+ \frac{\mO(1)(\delta N F_{N}(|\zeta_-|^2))^2}{\dist(z,\partial\overline{\Sigma}_{r_0,r_1})} \\
  &+\mO(\delta^2F_{N}(|\zeta_-|^2)^2N)\left| \sum_2^{N^2}\alpha_i \partial_z e_i(z) \right|,
  \end{split}
\end{equation}
\begin{equation}\label{eq10.19}
\begin{split}
 \partial_{\overline{z}}\tilde{g}(z,\alpha) =  - \delta \partial_{\overline{z}} |Z|^2\alpha_1  
  +\mO(\delta^2F_{N}(|\zeta_-|^2)^2N)\left| \alpha_1\overline{\partial_z Z} +
  \sum_2^{N^2}\alpha_i \partial_{\overline{z}} e_i(z) \right|.
  \end{split}
\end{equation}
\end{lemma}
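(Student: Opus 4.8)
The plan is to substitute the orthogonal decomposition $Q=Q(\alpha,z)=\alpha_1\overline Z(z)+\sum_2^{N^2}\alpha_j e_j(z)$ from \eqref{eq10.12.1} into $g(z,Q)=g_0(z)-\delta(Q|\overline Z)+T(z,Q)$ of \eqref{eq10.2} and to differentiate in $z$ and $\overline z$ by the chain rule, keeping careful track of which ingredients are holomorphic. First I would record the elementary facts that do all the work: (i) since $\Omega$ avoids a neighbourhood of the focal segment, $\zeta_\pm(z)$, hence $g_0(z)$ and the vector $Z(z)$, are holomorphic in $z$ (Remark~\ref{remh1}), so $\partial_{\overline z}g_0=0$, $\partial_{\overline z}Z=0$, $\partial_z\overline{Z(z)}=\overline{\partial_{\overline z}Z(z)}=0$ and $\partial_{\overline z}\overline{Z(z)}=\overline{\partial_z Z(z)}=\overline{\partial_z Z}$; (ii) because $e_2(z),\dots,e_{N^2}(z)$ is an orthonormal frame of $\overline Z(z)^\perp$, the inner product collapses, $(Q(\alpha,z)|\overline Z(z))=\alpha_1|Z(z)|^2$, whence $\tilde g(z,\alpha)=g_0(z)-\delta|Z(z)|^2\alpha_1+T(z,Q(\alpha,z))$ as in \eqref{eq10.13}; (iii) $g$, and therefore $T=g-g_0+\delta(Q|\overline Z)$, is holomorphic in $(z,Q)$ on $\Omega\times W$, so $\partial_{\overline z}T=0$ at fixed $Q$ and $d_{\overline Q}T=0$.

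Granting (i)--(iii), the chain rule produces
\[\partial_z\tilde g=\partial_z g_0-\delta\alpha_1\partial_z|Z|^2+(\partial_z T)(z,Q(\alpha,z))+(d_QT)(z,Q(\alpha,z))\Bigl(\textstyle\sum_{2}^{N^2}\alpha_j\partial_z e_j(z)\Bigr),\]
where the $\overline Z$-part of $\partial_z Q$ has dropped by (i), and
\[\partial_{\overline z}\tilde g=-\delta\alpha_1\partial_{\overline z}|Z|^2+(d_QT)(z,Q(\alpha,z))\Bigl(\alpha_1\overline{\partial_z Z}+\textstyle\sum_{2}^{N^2}\alpha_j\partial_{\overline z}e_j(z)\Bigr).\]
It then remains to estimate the two $T$-contributions. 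For $(\partial_z T)(z,Q)$ at fixed $Q$ I would compare with \eqref{eq10.16}: subtracting $\partial_z g_0-\delta(Q|\overline{\partial_z Z})$ identifies it with the remainder $\mO(1)(\delta NF_{N+1}(|\zeta_-|^2))^2/\dist(z,\partial\overline\Sigma_{r_0,r_1})$, which by $F_{N+1}\asymp F_N$ is exactly the third term of \eqref{eq10.18}. For $d_QT$ I would apply the Cauchy inequalities in $Q$ to the uniform bound \eqref{eq10.3.5}, over a ball of radius $\eta C_1N$ with $0<\eta<1$ — there is room for this, as noted after \eqref{eq10.11} — obtaining $\|d_QT(z,Q)\|=\mO(\delta^2 NF_N(|\zeta_-|^2)^2)$; pairing this operator bound with the explicit vectors $\sum_2\alpha_j\partial_z e_j(z)$ and $\alpha_1\overline{\partial_z Z}+\sum_2\alpha_j\partial_{\overline z}e_j(z)$ yields the last terms of \eqref{eq10.18} and \eqref{eq10.19}.

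The only genuinely delicate point is the bookkeeping in the chain rule: one must not differentiate the explicit first slot of $T$ with $\partial_{\overline z}$ (that vanishes, since $T(\cdot,Q)$ is holomorphic), and one must remember that the $z$-dependence of $Q(\alpha,z)$ enters only through the anti-holomorphic factor $\overline Z(z)$ — which feeds $\partial_{\overline z}\tilde g$ but not $\partial_z\tilde g$ — and through the merely smooth frame $e_j(z)$, which feeds both $\partial_z e_j$ and $\partial_{\overline z}e_j$ terms. Once these dependences are disentangled, everything else is a routine consequence of \eqref{eq10.16}, \eqref{eq10.3.5}, the equivalence $F_{N+1}\asymp F_N$ on the relevant range of $|\zeta_-|$, and the hypothesis $|Q|\le C_1N$.
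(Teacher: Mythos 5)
Your proof is correct and follows essentially the same route as the paper's: decompose $Q(\alpha,z)$, apply the chain rule while tracking that $g_0$, $Z$, $T$ are holomorphic and $\overline Z$ is antiholomorphic, and bound $(\partial_z T)$ and $d_Q T$ via the Cauchy inequalities together with \eqref{eq10.3.5}. The only cosmetic difference is that you invoke \eqref{eq10.16} to identify $(\partial_z T)$ rather than re-deriving its bound; in fact your intermediate estimate $\|d_QT\|=\mO(\delta^2 N F_N^2)$ is the one that matches the lemma's stated remainder, whereas the paper's displayed proof writes $\mO(\delta^2 N^2 F_N)$ at that step, apparently a typo.
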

\begin{proof}
Using \eqref{eq10.13}, one computes 
\begin{equation}
	\begin{split}
		&\partial_z \widetilde{g} \\
		&= \partial_z g_0 - \delta\alpha_1\partial_zZ\cdot \overline{Z}
		+ \partial_z(T(z,Q(\alpha,z))) \\ 
		&=\partial_z g_0 -\delta \partial_z Z\cdot\overline{Z} +(\partial_zT)(z,Q(\alpha,z))
		+ d_QT(z,Q(\alpha))\cdot \partial_zQ(\alpha,z) \\
		&=\partial_z g_0 -\delta \partial_z Z\cdot\overline{Z} +(\partial_zT)(z,Q(\alpha,z))
		+ (d_QT)(z,Q(\alpha,z))\cdot \sum_2^{N^2}\alpha_j\partial_ze_j(z),
	\end{split}
\end{equation}
where, to obtain the last equality, we used \eqref{eq10.12} and the fact that $\overline{Z}(z)$ 
is antiholomorphic in $z$. The Cauchy-inequalities together with \eqref{eq10.3.5} 
yield that 
\begin{equation}
	(\partial_zT)(z,Q(\alpha,z)) = \mO(1)
	\frac{(\delta NF_N)^2}{\mathrm{dist}(z,\partial\overline{\Sigma}_{r_0,r_1})},
\end{equation}
as well as 
\begin{equation}
	(d_QT)(z,Q(\alpha,z))\cdot \sum_2^{N^2}\alpha_j\partial_ze_j(z) 
	= \mO(\delta^2N^2F_N)\left|\sum_2^{N^2}\alpha_j\partial_ze_j(z) \right|,
\end{equation}
and we conclude \eqref{eq10.18}. Similarly, we obtain \eqref{eq10.19}.
\end{proof}
Continuing, recall that we work under assumptions \eqref{grpp.1} and 
\eqref{eq10.9} (recall as well that the last one implies \eqref{eq10.6} 
and \eqref{eq10.7}). We use \eqref{eq10.6}, \eqref{eq10.7} and apply Rouch\'e's Theorem 
to \eqref{eq10.13}, and we see that for $C_1>0$ large enough and for $|\alpha'|<C_1N$, 
the equation 
\begin{equation}\label{eq10.21}
	\tilde{g}(z,\alpha_1,\alpha') = 0
\end{equation}
has exactly one solution
\begin{equation}\label{eq10.22}
	\alpha_1=f(z,\alpha') \in D\left(0,\frac{C_1 N}{F_N(|\zeta_-|^2)}\right).
\end{equation}
Note that this yields the entire hypersurface \eqref{eq10.11.1} for 
$\Omega$ satisfying \eqref{eq10.9}, since $\tilde{g}\neq 0$ for $\alpha_1$ outside the above 
disc, which follows from \eqref{eq10.13},\eqref{eq10.3.5} and
\eqref{eq10.6}.

Moreover, $f$ satisfies 
\begin{equation}\label{eq10.23}
	f(z,\alpha')= \frac{g_0(z)}{\delta |Z|^2} + \mO(1)\delta N^2
			= \mO\left( \frac{g_0(z)}{\delta F_N(|\zeta_-|^2)^2} +\delta N^2\right).
\end{equation}
Differentiating \eqref{eq10.21} with respect to $z$ and $\overline{z}$, 
we obtain 
\begin{equation}\label{eq10.24}
 \partial_z\tilde{g} + \partial_{\alpha_1}\tilde{g}\cdot\partial_z f = 0, 
 \quad 
  \partial_{\overline{z}}\tilde{g} + \partial_{\alpha_1}\tilde{g}\cdot\partial_{\overline{z}} f = 0.
\end{equation}
Which implies that 
\begin{equation}\label{eq10.25}
 \partial_z f =  -(\partial_{\alpha_1}\tilde{g})^{-1}\partial_z\tilde{g} , \quad
 \partial_{\overline{z}} f =  -(\partial_{\alpha_1}\tilde{g})^{-1}\partial_{\overline{z}}\tilde{g}.
\end{equation}
Recall from \eqref{eq10.13} that $\tilde{g}$ is holomorphic in $\alpha_1,\dots,\alpha_{N^2}$ 
and so we see that $f$ is holomorphic in $\alpha_2,\dots,\alpha_{N^2}$. 
Applying $\partial_{\alpha_j}$, $j=2,\dots,N^2$, to \eqref{eq10.26}, we obtain 
 \begin{equation}\label{eq10.26}
	\partial_{\alpha_j}f = -(\partial_{\alpha_1}\tilde{g})^{-1}\partial_{\alpha_j}\tilde{g}, 
	\quad j=2,\dots,N^2.
 \end{equation}
Using \eqref{eq10.13} in the form 
\begin{equation}\label{eq10.27}
	\partial_{\alpha_1}\tilde{g} = - \delta |Z|^2(1+\mO(\delta F_{N+1}(|\zeta_-|^2)N)),
\end{equation}
and by Lemma \ref{lem10.1}, \eqref{eq10.25}, we obtain 
\begin{equation}\label{leq10.28}
\begin{split}
\partial_z f =& 
\frac{(1+\mO(\delta F_{N+1}(|\zeta_-|^2)N))}{\delta |Z|^2}
\bigg[ \partial_z g_0(z) - \delta (\partial_z|Z|^2)f \\
&+\frac{\mO(1)(\delta N F_{N+1}(|\zeta_-|^2))^2}{\dist(z,\partial\overline{\Sigma}_{r_0,r_1})} 
  +\mO(\delta^2F_{N+1}^2(|\zeta_-|^2)N)\left| \sum_2^{N^2}\alpha_i \partial_z e_i(z) \right|
\bigg],
\end{split}
\end{equation}
and
\begin{equation}\label{leq10.29}
\begin{split}
\partial_{\overline{z}} f = &
\frac{(1+\mO(\delta F_{N+1}(|\zeta_-|^2)N))}{\delta |Z|^2}
\bigg[ - \delta (\partial_{\overline{z}}|Z|^2)f \\
&+\mO(\delta^2F_{N+1}^2(|\zeta_-|^2)N)\left| f\overline{\partial_z Z}+
\sum_2^{N^2}\alpha_i \partial_{\overline{z}} e_i(z) \right|
\bigg].
\end{split}
\end{equation}
Furthermore, using \eqref{eq10.15} and \eqref{eq10.26}, 
we get 
\begin{equation}\label{leq10.30}
	\partial_{\alpha_j}f= \mO(1)\frac{\delta^2N F_{N+1}^2(|\zeta_-|^2)}{\delta F_N^2(|\zeta_-|^2)}
	= \mO(\delta N), \quad 
	j=2,\dots,N^2.
\end{equation}
\section{Choosing appropriate coordinates}
In the following we adopt the strategy developed in \cite[Section 5]{SjVo15}: The next 
step is to find an appropriate orthonormal basis $e_1(z),\dots,e_{N^2}(z) \in\C^{N^2}$ 
with 
\begin{equation}\label{eq10.31}
	e_1(z)= \frac{\overline{Z}(z)}{|Z(z)|},
\end{equation}
such that we obtain a good control over the terms 
$| \sum_2^{N^2}\alpha_i\partial_ze_i(z)|$, 
$|\sum_2^{N^2}\alpha_i\partial_{\overline{z}}e_i(z)|$ and such that 
the differential form $dQ_1\wedge \dots\wedge dQ_{N^2}|_{\alpha_1=f(z,\alpha')}$ 
can be expressed easily up to small errors. 
\begin{prop}\label{prop10.1}
Let $z_0\in\Sigma_{r_0-N^{-1},r_1}$. 
There exists an orthonormal basis $e_1(z),\dots,e_{N^2}(z)$ in $\C^{N^2}$ which 
depends smoothly on $z$ in a small neighbourhood of $z_0$ in 
$\C\backslash [-2\sqrt{ab},2\sqrt{ab}]$ such that 
\begin{equation*}
	\begin{split}
	&1) \quad e_1(z)= \frac{\overline{Z}(z)}{|Z(z)|}, \\
	&2) \quad \C e_1(z_0)\oplus \C e_2(z_0) =
	 \C \overline{Z}(z_0)\oplus \C \overline{\partial_zZ}(z_0), \\ 
	 & 3) \quad e_j(z) -e_j(z_0) = \mO((z_0-z)^2), ~ j=3,\dots,N^2, 
	 \text{ uniformly w.r.t. }(z,z_0).
	\end{split}
\end{equation*}
\end{prop}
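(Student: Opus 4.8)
The plan is to build the orthonormal basis by a Gram–Schmidt type construction applied to a cleverly chosen frame, carefully arranging the $z$-dependence so that only $e_1$ and $e_2$ carry first-order variation at $z_0$ while all the higher basis vectors are stationary to first order.

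First I would start from the pair of vectors $\overline{Z}(z)$ and $\overline{\partial_z Z}(z)$, which are antiholomorphic in $z$ and, by the non-degeneracy encoded in \eqref{eq10.11} (namely $d_Q g \neq 0$ on $\Gamma$, equivalently $Z(z)\neq 0$), span a two-dimensional subspace $V(z)\subset\C^{N^2}$ at $z=z_0$, at least after shrinking the neighbourhood. Applying Gram–Schmidt to $(\overline{Z}(z), \overline{\partial_zZ}(z))$ produces $e_1(z)=\overline{Z}(z)/|Z(z)|$ and a second unit vector $e_2(z)$ with $\C e_1(z)\oplus\C e_2(z)=V(z)$ for $z$ near $z_0$; in particular this gives 1) and 2) (the latter being just the special case $z=z_0$). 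The function $z\mapsto V(z)$, hence $z\mapsto e_1(z), e_2(z)$, is smooth near $z_0$ since $Z$ and $\partial_z Z$ are smooth and $|Z|$ is bounded away from $0$ by \eqref{eq10.3.6}.

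Next, for the vectors $e_3,\dots,e_{N^2}$ I would choose a fixed orthonormal basis $f_3,\dots,f_{N^2}$ of $V(z_0)^{\perp}$ and define $e_j(z)$, $j\geq 3$, by orthogonally projecting $f_j$ onto $V(z)^{\perp}$ and then orthonormalising (Gram–Schmidt) in that fixed order. The key observation is that $z\mapsto P_{V(z)^\perp}$, the orthogonal projection onto $V(z)^\perp$, is a smooth family of projections that, when differentiated at $z_0$, maps into $V(z_0)$ (a standard fact: $\dot P P + P\dot P = \dot P$ forces $\dot P$ to be off-diagonal with respect to the splitting $V(z_0)\oplus V(z_0)^\perp$). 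Hence $\partial_z\big(P_{V(z)^\perp} f_j\big)\big|_{z_0}\in V(z_0)$, which is orthogonal to all the $f_k$; after the Gram–Schmidt normalisation this yields $\partial_z e_j(z_0)\in V(z_0)$ as well, but since $e_j(z)$ is by construction a unit vector in $V(z)^\perp$ for all $z$, its derivative at $z_0$ is also orthogonal to $e_j(z_0)$ and, more carefully, the component of $\partial_z e_j(z_0)$ along $V(z_0)^\perp$ vanishes. This forces $e_j(z)-e_j(z_0)=\mathcal{O}((z_0-z)^2)$, giving 3); the uniformity in $(z,z_0)$ follows because all the estimates depend only on $C^2$ bounds for $Z,\partial_z Z$ and the lower bound on $|Z|$, which are uniform over $\Sigma_{r_0-N^{-1},r_1}$.

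The main obstacle I expect is bookkeeping the cancellation in 3) precisely: one must be careful that the Gram–Schmidt normalisation step does not reintroduce a first-order term along $V(z_0)^\perp$, and that the freedom in choosing $e_2$ (a phase and the one-dimensional choice inside $V(z)$) does not interfere with the $j\geq 3$ vectors. The clean way to handle this is to work not with explicit Gram–Schmidt formulas but with the projections $P_{V(z)}$ and $P_{V(z)^\perp}=I-P_{V(z)}$ directly, note $P_{V(z)^\perp}$ is $C^\infty$ with $\dot P_{V(z_0)^\perp}$ mapping $V(z_0)^\perp\to V(z_0)$, so that $e_j(z):=P_{V(z)^\perp}f_j / \|P_{V(z)^\perp}f_j\|$ automatically satisfies $\langle e_j(z), f_k\rangle = \delta_{jk}+\mathcal{O}((z-z_0)^2)$ for $j,k\geq 3$ because $\langle \dot P_{V(z_0)^\perp} f_j, f_k\rangle = 0$; a final application of Gram–Schmidt among the $e_j(z)$, $j\geq 3$, only changes them by $\mathcal{O}((z-z_0)^2)$ and restores exact orthonormality, after which 3) holds. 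Properties 1) and 2) are then immediate from the construction of $e_1,e_2$ from $\overline{Z},\overline{\partial_z Z}$.
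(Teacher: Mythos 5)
There is a genuine gap in the conclusion of property 3). From the off-diagonality of $\dot P_{V(z_0)^\perp}$ you correctly deduce $\nabla e_j(z_0)\in V(z_0)$, i.e.\ the $V(z_0)^\perp$-component of $\nabla e_j(z_0)$ vanishes. But 3) requires the \emph{full} gradient $\nabla e_j(z_0)$ to vanish, and your construction leaves a nonzero $V(z_0)$-component. To see this, take $u_1=\overline Z/|Z|$ and let $u_2$ be the Gram--Schmidt orthonormalisation of $\overline{\partial_zZ}$ against $u_1$, a moving orthonormal frame of $V(z)$; since $(f_j|u_i(z_0))=0$ one has
\begin{equation*}
\partial_z\bigl(P_{V(z)}f_j\bigr)\big|_{z_0}
=\sum_{i=1}^{2}\bigl(f_j\,\big|\,\partial_{\bar z}u_i(z_0)\bigr)\,u_i(z_0).
\end{equation*}
The $i=1$ term vanishes because $\partial_{\bar z}(\overline Z/|Z|)\in V(z_0)$ (using $\partial_{\bar z}\overline Z=\overline{\partial_zZ}$), but $\partial_{\bar z}u_2(z_0)$ contains $\overline{\partial_z^2Z}(z_0)$, which in general is not in $\mathrm{span}(\overline Z(z_0),\overline{\partial_zZ}(z_0))$; hence $(f_j\,|\,\partial_{\bar z}u_2(z_0))\ne 0$ for some $j\ge3$ and $\partial_z e_j(z_0)\ne 0$. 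Your observation that $(e_j(z)|f_k)=\delta_{jk}+\mO((z-z_0)^2)$ for $j,k\ge3$ is perfectly consistent with $e_j(z)-f_j$ having an $\mO(z-z_0)$ piece inside $V(z_0)$; it does not give 3).

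The paper (following the proof of Proposition 5.1 in \cite{SjVo15}, recalled just before \eqref{eq10.56}) projects away only the line $\C e_1(z)$, not the two-plane $V(z)$. With $V_0\nu_j^0=e_j(z_0)$ for $j\ge3$, $V(z)=\Pi(z)V_0$ where $\Pi(z)=1-e_1(z)e_1^*(z)$, $U(z)=V(z)(V^*(z)V(z))^{-1/2}$, and $e_j(z)=U(z)\nu_j^0$, one has $V^*(z_0)V(z_0)=I$ and $\nabla(V^*V)(z_0)=0$ because $e_1(z_0)\perp\mathrm{Range}\,V_0$, so $\nabla e_j(z_0)=\nabla\Pi(z_0)e_j(z_0)$. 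Now $\partial_z\overline Z=0$ and $\partial_{\bar z}\overline Z=\overline{\partial_zZ}$ give $\partial_ze_1(z_0)\in\C e_1(z_0)$ and $\partial_{\bar z}e_1(z_0)\in\mathrm{span}(\overline Z(z_0),\overline{\partial_zZ}(z_0))$, both orthogonal to $e_j(z_0)$ for $j\ge3$, which forces $\nabla\Pi(z_0)e_j(z_0)=0$. The decisive difference from your construction is that the object being removed, $e_1(z)$, varies to first order only inside $V(z_0)$, whereas the two-plane $V(z)$ does not (because of the $\overline{\partial_z^2Z}$ contribution); so one must project out $\C e_1(z)$ rather than $V(z)$.
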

\begin{proof} The proof is identical, mutatis mutandis, to the proof of 
	Proposition 5.1 in \cite{SjVo15}.
\end{proof}
As remarked after the proof of Proposition 5.1 in \cite{SjVo15}, 
we can make the following choice: 
\begin{equation}\label{eq10.32}
	e_2(z)= |f_2(z)|^{-1}f_2(z), \quad 
	f_2(z) = \overline{\partial_z Z(z)} - 
	\sum_{j\neq 2} (\overline{\partial_z Z(z)}|e_j(z))e_j(z),
\end{equation}
so that for $z=z_0$, 
\begin{equation}\label{eq10.33}
	f_2(z_0) = \overline{\partial_zZ(z_0)} - \frac{(Z(z_0)|\partial_z Z(z_0))}{|Z(z_0)|^2}
	\overline{Z(z_0)}.
\end{equation}
\begin{prop}\label{prop10.2}
For all $z\in\Sigma_1\backslash [-2\sqrt{ab},2\sqrt{ab}]$, we have 
\begin{equation}\label{eq10.34}
|\partial_z Z(z)|^2 - \frac{|(Z(z)|\partial_z Z(z))|^2}{|Z(z)|^2} 
= 2K_N(z)^2\partial_{z}\partial_{\bar{z}}\ln K_N(z),
\end{equation}
where
\begin{equation}\label{eq10.35}
	K_N(z)= \sum_{\mu=0}^{N-1}
	 \left|\frac{\zeta_-^{\mu+1} - \zeta_+^{\mu+1}}{a(\zeta_- - \zeta_+)}\right|^2
	 =\frac{1}{|a|^2}\sum_{\mu=0}^{N-1}|\zeta_-|^{2\mu}\, |F_{\mu+1}(\zeta_+/\zeta_-)|^2.
\end{equation}
\end{prop}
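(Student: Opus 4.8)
The plan is to prove the identity \eqref{eq10.34} by a direct computation, recognizing that $Z(z)$ is essentially a rank-one-modulated outer product of the vector
$$
v(z) = \left( \frac{\zeta_+^{k} - \zeta_-^{k}}{a(\zeta_+ - \zeta_-)} \right)_{1\le k\le N} \in \C^N
$$
with the analogous ``reversed'' vector $w(z) = (v_{N+1-j}(z))_{1\le j \le N}$, so that as an element of $\C^{N^2}$ we have $Z = w \otimes v$. A key observation is that, since the entries of $w$ are just a permutation of the entries of $v$, we have $|w|=|v|$ and in fact $|Z| = |w||v| = |v|^2$, while $K_N(z) = |v(z)|^2$ as well; more importantly, the map $z \mapsto w(z)$ differs from $z\mapsto v(z)$ only by a fixed permutation of coordinates, so $|\partial_z w| = |\partial_z v|$ and $(w|\partial_z w) = (v|\partial_z v)$ pointwise. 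This reduces the left-hand side of \eqref{eq10.34}, via the Leibniz rule $\partial_z Z = (\partial_z w)\otimes v + w \otimes (\partial_z v)$ and the multiplicativity of Hilbert--Schmidt norms and inner products on tensor products, to an expression purely in $|v|^2$, $|\partial_z v|^2$, and $(v|\partial_z v)$.

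More precisely, first I would expand, using $\langle a\otimes b, c\otimes d\rangle = \langle a,c\rangle\langle b,d\rangle$,
$$
|\partial_z Z|^2 = |\partial_z w|^2 |v|^2 + 2\,\mathrm{Re}\big[(\partial_z w | w)(v|\partial_z v)\big] + |w|^2|\partial_z v|^2,
$$
$$
(Z|\partial_z Z) = (w|\partial_z w)|v|^2 + |w|^2 (v|\partial_z v),
$$
and then substitute $|w|=|v|$, $(w|\partial_z w)=(v|\partial_z v)$, $|\partial_z w|=|\partial_z v|$ (the last one needs a small argument: the cross term $\mathrm{Re}[(\partial_z w|w)(v|\partial_z v)]$ equals $\mathrm{Re}[\overline{(v|\partial_z v)}(v|\partial_z v)] = |(v|\partial_z v)|^2$, so one gets a clean telescoping). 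After the dust settles, the left-hand side of \eqref{eq10.34} becomes
$$
2\big(|v|^2|\partial_z v|^2 - |(v|\partial_z v)|^2\big) = 2\big(|v|^2|\partial_z v|^2 - |(v|\partial_z v)|^2\big).
$$
Second, I would compute the right-hand side: with $K_N = |v|^2 = (v|v)$, one has $\partial_{\bar z}\ln K_N = (\partial_{\bar z} v | v)/K_N = \overline{(v|\partial_z v)}/K_N$ (using that $v$ is holomorphic in $z$, so $\partial_{\bar z}$ hits only the conjugated slot), and then $\partial_z\partial_{\bar z}\ln K_N = \big(|\partial_z v|^2 K_N - (v|\partial_z v)\overline{(v|\partial_z v)}\big)/K_N^2 = \big(|v|^2|\partial_z v|^2 - |(v|\partial_z v)|^2\big)/K_N^2$. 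Multiplying by $2K_N^2$ reproduces exactly the left-hand side, which closes the argument. The holomorphy of $\zeta_\pm$, hence of $v$, on $\C\setminus[-2\sqrt{ab},2\sqrt{ab}]$ is what makes the Wirtinger-derivative bookkeeping legitimate, and this is guaranteed by Remark \ref{remh1}.

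The main obstacle, such as it is, is purely bookkeeping: making sure the tensor-product structure $Z = w\otimes v$ is set up with the right index conventions (note the entry $(j,k)$ of $Z$ in \eqref{grpp.14} pairs index $N+1-j$ with index $k$, so the ``first factor'' is the flip of $v$), and verifying carefully that flipping coordinates is a unitary operation that commutes with $\partial_z$, so that all the norms and inner products involving $w$ reduce to those of $v$. One should also double-check the reality/conjugation placement in the cross term so that the $2\mathrm{Re}[\cdots]$ collapses to precisely $2|(v|\partial_z v)|^2$ and cancels against the corresponding term coming from $|(Z|\partial_z Z)|^2/|Z|^2$; getting a sign or a conjugate wrong here would spoil the identity. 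Once these conventions are pinned down, the computation is mechanical and the formula \eqref{eq10.34} drops out, with $K_N$ as in \eqref{eq10.35} since $|v(z)|^2 = \frac{1}{|a|^2}\sum_{\mu=0}^{N-1}|\zeta_-|^{2\mu}|F_{\mu+1}(\zeta_+/\zeta_-)|^2$ by the finite geometric sum identity for $F_{\mu+1}$.
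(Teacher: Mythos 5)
Your proof is correct, and it takes a genuinely different (and cleaner) route than the paper. You exploit from the outset that \eqref{grpp.14} exhibits $Z$ as the rank-one array $Z_{jk}=w_j v_k$ with $v_k=\frac{\zeta_+^k-\zeta_-^k}{a(\zeta_+-\zeta_-)}$ and $w_j=v_{N+1-j}$, so that $|Z|=|v|^2=K_N$, $(Z|\partial_z Z)=2K_N(v|\partial_z v)$ and $|\partial_z Z|^2=2K_N|\partial_z v|^2+2|(v|\partial_z v)|^2$, collapsing the left-hand side to $2\bigl(|v|^2|\partial_z v|^2-|(v|\partial_z v)|^2\bigr)$; this is precisely $2K_N^2\partial_z\partial_{\bar z}\ln K_N$ by the elementary Edelman--Kostlan identity for a holomorphic vector $v$. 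The paper instead works entry by entry: it writes $a^2 Z_{jk}=\zeta_-^{\mu+\nu}F_{\mu+1}F_{\nu+1}$, differentiates, introduces the logarithmic derivatives $L_{\mu+1}$ of the $F$'s, carries out a substantial index-manipulation (\eqref{eq10.36}--\eqref{eq10.42}) arriving at $\frac{1}{|a|^4|\zeta_--\zeta_+|^{4}}\sum_{\mu,\nu}|f_\nu\partial_z f_\mu - f_\mu\partial_z f_\nu|^2$ with $f_\mu=\zeta_-^{\mu+1}-\zeta_+^{\mu+1}$ (the exponent printed in \eqref{eq10.43} is $2$ but should be $4$ for the factors to close; the proposition's conclusion is nevertheless correct), then repackages this sum as $2L_N^2\partial_z\partial_{\bar z}\ln L_N$ and removes the harmonic factor $\ln|\zeta_--\zeta_+|^2$ to pass from $L_N$ to $K_N$. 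Your approach buys a shorter, structurally transparent proof that makes clear the identity holds for any holomorphic $\C^N$-valued map $v$; the paper's computation has the side benefit of producing the explicit double sum \eqref{eq10.38}/\eqref{eq10.43} that is reused in the lower bound \eqref{eq10.39} and in Proposition \ref{prop10.3}, which your route does not deliver for free.

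One small infelicity in your writeup: the parenthetical you attach to ``$|\partial_z w|=|\partial_z v|$'' actually discusses how the cross term collapses, which is a separate point; the claim $|\partial_z w|=|\partial_z v|$ itself is immediate because $w$ is obtained from $v$ by a fixed ($z$-independent) coordinate permutation, which is unitary and commutes with $\partial_z$. This is harmless, but worth disentangling if you write it up formally.
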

Before giving the proof of this proposition, let us note that by \eqref{eq10.3.4} $K_N=|Z|$.
\begin{proof}
Until further notice, we write $F_n = F_n(\zeta_+/\zeta_-)$.
First, use \eqref{grpp.14}, in the form 
\begin{equation*}
a^2 Z_{j,k} = \zeta_-^{N-j+k-1}F_{N-j+1}F_k
= \zeta_-^{\mu+\nu}F_{\mu+1}F_{\nu+1},
\end{equation*}
with $\mu = N-j$, $\nu =k-1$ and $\mu,\nu\in\{0,\dots,N-1\}$, 
to compute that 
\begin{equation*}
\begin{split}
 \frac{a^{2}}{\partial_z\ln\zeta_-}\partial_zZ_{j,k}
 = \zeta_-^{\mu+\nu} F_{\mu+1} F_{\nu+1}
 \cdot\left[ (\mu+\nu) - L_{\mu+1} - L_{\nu+1}
 \right],
 \end{split}
\end{equation*}
where $L_{n}:=\frac{2\zeta_+}{\zeta_-}\partial_t\ln F_{n}(t)|_{t=\zeta_+/\zeta_-}$. Hence, 
one obtains from the above expression and from \eqref{grpp.14} that 
\begin{equation}\label{eq10.36}
 \frac{|a|^4|(\partial_z Z|Z)|}{|\partial_z\ln\zeta_-|}= 
 \left|\sum_{\mu,\nu =0}^{N-1} |\zeta_-|^{2(\mu+\nu)}|F_{\mu+1}F_{\nu+1}|^2
 [(\mu+\nu)-L_{\mu+1} -L_{\nu+1} ]\right|.
 \end{equation}
Using \eqref{eq10.3.4} and a change of index, we obtain that 
\eqref{eq10.36} is equal to 
 \begin{equation*}
 \begin{split}
  &2\left|\sum_{\nu =0}^{N-1}|\zeta_-|^{2\nu}|F_{\nu+1}|^2\sum_{\mu =0}^{N-1} |\zeta_-|^{2\mu}
  |F_{\mu+1}|^2
 [\mu-L_{\mu+1}]\right| \\
  & =2|a|^2 |Z|\left|\sum_{\mu=0}^{N-1} |\zeta_-|^{2\mu}|F_{\mu+1}|^2
 [\mu-L_{\mu+1} ]\right|,
 \end{split}
\end{equation*}
so
 \begin{equation}\label{eq10.36.1}
   \frac{|a|^4|(\partial_z Z|Z)|}{|\partial_z\ln\zeta_-| |Z|}
   =2|a|^2\left|\sum_{\mu=0}^{N-1} |\zeta_-|^{2\mu}|F_{\mu+1}|^2
 [\mu-L_{\mu+1} ]\right|.
\end{equation}
Similarly, 
\begin{equation}\label{eq10.37}
 \frac{|a|^4 |\partial_z Z|^2}{|\partial_z\ln\zeta_-|^2}= 
 \sum_{\mu,\nu =0}^{N-1} |\zeta_-|^{2(\mu+\nu)}|F_{\mu+1}F_{\nu+1}|^2
 |(\mu+\nu)-L_{\mu+1} -L_{\nu+1} |^2.
\end{equation}
Combining \eqref{eq10.36.1}, \eqref{eq10.37}, we obtain 
\begin{equation}\label{eq10.37.1}
\begin{split}
&\frac{|a|^4}{|\partial_z\ln\zeta_-|^2}
\left(|\partial_z Z|^2-\frac{|(\partial_z Z|Z)|^2}{|Z|^2}\right) \\ 
& = \sum_{\mu,\nu =0}^{N-1} |\zeta_-|^{2(\mu+\nu)}|F_{\mu+1}F_{\nu+1}|^2
 \big[
 |(\mu+\nu)-L_{\mu+1} -L_{\nu+1} |^2 \\
 &\phantom{..........................................................}
 - 4(\mu-L_{\mu+1})(\nu-\overline{L_{\nu+1}})\big].
 \end{split}
\end{equation}
By permuting $\mu,\nu$ we get the same sum and after taking the 
average of the two expressions we may replace 
$ - 4(\mu-L_{\mu+1})(\nu-\overline{L_{\nu+1}})$ by its real part. Then, 
\begin{equation}\label{eq10.37.2}
\begin{split}
 &|(\mu+\nu)-L_{\mu+1} -L_{\nu+1} |^2 - 4\mathrm{Re}(\mu-L_{\mu+1})(\nu-\overline{L_{\nu+1}})\\
 &=|(\mu-\nu)+(L_{\nu+1}-L_{\mu+1})|^2 \\ 
 &= \left| (\mu +1) \frac{1 + t^{\mu+1}}{1 - t^{\mu+1}} - (\nu +1) \frac{1 + t^{\nu+1}}{1 - t^{\nu+1}}
      \right|^2_{t=\zeta_+/\zeta_-},
 \end{split}
\end{equation}
%
%
where we also used that by the definition of $L_{\mu}$ above and 
\eqref{eq10.2.1}
\begin{equation*}
\begin{split}
 L_{\nu+1}-L_{\mu+1} &= 
 2\frac{\zeta_+}{\zeta_-}[\partial_t \ln (1 -t^{\nu+1}) - \partial_t\ln(1-t^{\mu+1} )]_{t=\zeta_+/\zeta_-}\\
 &= \frac{2(\mu+1)t^{\mu+1}}{1-t^{\mu+1}}- \frac{2(\nu+1)t^{\nu+1}}{1-t^{\nu+1}}
 \bigg|_{t=\zeta_+/\zeta_-}.
  \end{split}
\end{equation*}
Combining this with \eqref{eq10.37.1}, we obtain
\begin{equation}\label{eq10.38}
\begin{split}
&\frac{|a|^4}{|\partial_z\ln\zeta_-|^2}
\left(|\partial_z Z|^2-\frac{|(\partial_z Z|Z)|^2}{|Z|^2}\right) \\ 
& = \sum_{\mu,\nu =0}^{N-1} |\zeta_-|^{2(\mu+\nu)}|F_{\mu+1}F_{\nu+1}|^2
 \left| (\mu +1) \frac{\zeta_-^{\mu+1} + \zeta_+^{\mu+1}}{\zeta_-^{\mu+1} - \zeta_+^{\mu+1}}
  - (\nu +1) \frac{\zeta_-^{\nu+1} + \zeta_+^{\nu+1}}{\zeta_-^{\nu+1} - \zeta_+^{\nu+1}}
      \right|^2.
 \end{split}
\end{equation}
\begin{remark}
Observe that the summands in \eqref{eq10.38} are equal to zero whenever $\mu=\nu$ 
and that the summands corresponding to the index pair $(\mu,\nu)$ is equal to the 
one corresponding to $(\nu,\mu)$. Hence, by calculating explicitly the terms for 
$(\mu,\nu)=(1,0),(0,1)$, we obtain that \eqref{eq10.38} is larger or equal than 
	\begin{equation}\label{eq10.38.1}
		2|\zeta_-|^2 |F_2 F_1|^2
		\left| 2 \frac{\zeta_-^{2} + \zeta_+^{2}}{\zeta_-^{2} - \zeta_+^{2}}
  -  \frac{\zeta_- + \zeta_+}{\zeta_- - \zeta_+}
      \right|^2.
	\end{equation}
By \eqref{eq10.2.1}, we have that $F_1(\zeta_+/\zeta_-) = 1$ and 
$F_2(\zeta_+/\zeta_-) = 1 + \zeta_+/\zeta_-$. Therefore, \eqref{eq10.38.1} 
is equal to
\begin{equation}\label{eq10.38.3}
\begin{split}
		2|\zeta_- +\zeta_+|^2
		\left| 2 \frac{\zeta_-^{2} + \zeta_+^{2}}{\zeta_-^{2} - \zeta_+^{2}}
  -  \frac{\zeta_- + \zeta_+}{\zeta_- - \zeta_+}
      \right|^2
      &=\frac{2 \left|2\zeta_-^2 +2\zeta_+^2 -\zeta_-^2-\zeta_+^2-2\zeta_-\zeta_+\right|^2}
      {|\zeta_- - \zeta_+|^2}\\
      &=2|\zeta_- -\zeta_+|^2.
      \end{split} 
	\end{equation}
Hence, 
\begin{equation}\label{eq10.39}
\left(|\partial_z Z|^2-\frac{|(\partial_z Z|Z)|^2}{|Z|^2}\right) \geq 
\frac{2|\partial_z\ln\zeta_-|^2|\zeta_- -\zeta_+|^2}{|a|^{4}} = 
\frac{2|\partial_z(\zeta_++\zeta_-)|^2 }{|a|^{4}}=
\frac{2}{|a|^6},
\end{equation}
where we used \eqref{algrel}, in particular that $\zeta_++\zeta_- = -z/a$ and 
that 
\begin{equation}\label{eq10.38.2}
\partial_z\ln\zeta_- = -\partial_z\ln\zeta_+.
\end{equation}
Thus, we conclude that for all $z\in\Sigma_1\backslash [-2\sqrt{ab},2\sqrt{ab}]$ 
the vectors $Z(z)$ and $\partial_zZ(z)$ are linearly independent. 
\end{remark}
Continuing, observe that the summands on the right hand side of \eqref{eq10.38} 
are equal to 
\begin{equation}\label{eq10.40}
 \left| (\mu +1) 
 \frac{(\zeta_-^{\mu+1} + \zeta_+^{\mu+1})(\zeta_-^{\nu+1} - \zeta_+^{\nu+1})}
 	{(\zeta_- - \zeta_+)^2}
  - (\nu +1) \frac{(\zeta_-^{\nu+1} + \zeta_+^{\nu+1})(\zeta_-^{\mu+1} - \zeta_+^{\mu+1})}
  {(\zeta_- - \zeta_+)^2}
      \right|^2.
\end{equation}
By \eqref{eq10.38.2},  
\begin{equation}\label{eq10.41}
	(\mu+1)(\zeta_-^{\mu+1} + \zeta_+^{\mu+1}) \partial_z\ln\zeta_-
	= \partial_z(\zeta_-^{\mu+1} - \zeta_+^{\mu+1}).
\end{equation}
Thus, \eqref{eq10.40} is equal to 
\begin{equation}\label{eq10.42}
\frac{|\partial_z\ln\zeta_-|^{-2}}{|\zeta_- - \zeta_+|^4}
 \left| (\zeta_-^{\nu+1} - \zeta_+^{\nu+1})\partial_z(\zeta_-^{\mu+1} - \zeta_+^{\mu+1})
  - (\zeta_-^{\mu+1} - \zeta_+^{\mu+1})\partial_z(\zeta_-^{\nu+1} - \zeta_+^{\nu+1})
      \right|^2.
\end{equation}
Writing $f_{\mu}(z)=\zeta_-^{\mu+1}(z)-\zeta_+^{\mu+1}(z)$, it follows from \eqref{eq10.38} 
and \eqref{eq10.42} that 
\begin{equation}\label{eq10.43}
\left(|\partial_z Z|^2-\frac{|(\partial_z Z|Z)|^2}{|Z|^2}\right) 
 = \frac{1}{|a|^4|\zeta_--\zeta_+|^2}\sum_{\mu,\nu =0}^{N-1} 
 \left| f_{\nu}(z)\partial_zf_{\mu}(z)-f_{\mu}(z)\partial_z f_{\nu}(z)  \right|^2.
\end{equation}
Since $f_{\mu}$ is holomorphic in $z$, we have 
$(\partial_zf_{\mu})(\overline{\partial_zf_{\mu}})=
\partial_z\partial_{\bar{z}}|f_{\mu}|^2$, and we obtain 
\begin{equation}\label{eq10.43.2}
\begin{split}
 | f_{\nu}(z)\partial_zf_{\mu}(z)-f_{\mu}(z)\partial_z f_{\nu}(z)  |^2
 = 
 |f_{\nu}(z)|^2\partial_z\partial_{\bar{z}}|f_{\mu}(z)|^2 + 
  |f_{\mu}(z)|^2\partial_z\partial_{\bar{z}}|f_{\nu}(z)|^2 \\
  -  (\partial_z |f_{\nu}(z)|^2)(\partial_{\bar{z}}|f_{\mu}(z)|^2)
  -  (\partial_z |f_{\mu}(z)|^2)(\partial_{\bar{z}}|f_{\nu}(z)|^2).
  \end{split}
\end{equation}
Using an exchange of summation index, we obtain 
from \eqref{eq10.43} and \eqref{eq10.43.2} 
\begin{equation}\label{eq10.43.1}
\begin{split}
&
\left(|\partial_z Z|^2-\frac{|(\partial_z Z|Z)|^2}{|Z|^2}\right) \\ 
& = \frac{2}{|a|^4|\zeta_--\zeta_+|^2}\sum_{\mu,\nu =0}^{N-1} 
\big[|f_{\nu}(z)|^2\partial_{z}\partial_{\bar{z}}|f_{\mu}(z)|^2 - (\partial_{z}|f_{\mu}(z)|^2)(\partial_{\bar{z}}|f_{\nu}(z)|^2)\big ] \\
& = \frac{2}{|a|^4|\zeta_--\zeta_+|^2}
\big[L_N(z) \partial_{z}\partial_{\bar{z}}L_N(z) - (\partial_{z}L_N(z))(\partial_{\bar{z}}L_N(z))\big ],
 \end{split}
\end{equation}
where $L_N(z):= \sum_{\nu =0}^{N-1} |f_{\nu}(z)|^2$, so that by \eqref{eq10.35}
$$
	K_N=\frac{L_N}{|a|^2|\zeta_- -\zeta_+|^2}
$$
Since we assumed that $z\notin [-2\sqrt{ab},2\sqrt{ab}]$, 
$\zeta_{\pm}(z)$ are holomorphic functions in $z$ and $\zeta_-\neq\zeta_+$. It follows 
that $\ln|\zeta_--\zeta_+|^2$ is harmonic, hence 
$\partial_z\partial_{\bar{z}} \ln L_N =\partial_z\partial_{\bar{z}} \ln K_N $, and 
\eqref{eq10.43} is equal to 
\begin{equation}\label{eq10.44}
2K_N^2 \partial_z\partial_{\bar{z}} \ln K_N =
2\big[K_N(z)\partial_{z}\partial_{\bar{z}}K_N(z) - \partial_z K_N(z) \partial_{\bar{z}}K_N(z)\big].
\end{equation}
\end{proof}
Next we are interested in obtaining bounds on \eqref{eq10.34}.
\begin{prop}\label{prop10.3}
Assuming \eqref{eq10.9}, we have that 
\begin{equation}\label{eq10.45}
\left(|\partial_z Z|^2-\frac{|(\partial_z Z|Z)|^2}{|Z|^2}\right) 
\asymp 
\left(F_N(|\zeta_-|^2)\right)^4.
\end{equation}
\end{prop}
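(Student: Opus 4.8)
The plan is to start from the exact identity \eqref{eq10.38} of Proposition \ref{prop10.2} and reduce the quantity on the left of \eqref{eq10.45} to a purely arithmetic double sum. Put $\rho:=|\zeta_-|^2$, $w:=\zeta_+/\zeta_-$, and
\[
A_\mu:=(\mu+1)\,\frac{\zeta_-^{\mu+1}+\zeta_+^{\mu+1}}{\zeta_-^{\mu+1}-\zeta_+^{\mu+1}}=(\mu+1)\,\frac{1+w^{\mu+1}}{1-w^{\mu+1}},
\qquad
S:=\sum_{\mu,\nu=0}^{N-1}\rho^{\mu+\nu}\,|A_\mu-A_\nu|^2 .
\]
Under \eqref{eq10.9} the point $z$ stays in a fixed neighbourhood of neither focal point, so by \eqref{algrel} the quantity $(\zeta_--\zeta_+)^2=z^2/a^2-4b/a$ is bounded and bounded away from $0$; hence $|\zeta_--\zeta_+|\asymp1$ and $|\partial_z\ln\zeta_-|=|a(\zeta_--\zeta_+)|^{-1}\asymp1$, while $|w|\le C'<1$ and $|F_{\mu+1}(w)|=|1-w^{\mu+1}|/|1-w|\asymp1$ uniformly in $\mu$ (cf. the discussion preceding \eqref{eq10.3.6}). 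Inserting these bounds into \eqref{eq10.38} collapses it to $\left(|\partial_z Z|^2-|(\partial_z Z|Z)|^2/|Z|^2\right)\asymp S$. Since $F_N(|\zeta_-|^2)\asymp(1-\rho)^{-1}$ by \eqref{eq10.3.6}, the proposition is equivalent to the assertion that $S\asymp(1-\rho)^{-4}$, uniformly for $|w|\le C'$ and $\rho$ in the range imposed by \eqref{eq10.9}, in particular $\rho\le(1-N^{-1})^2$ with $\rho$ bounded below by a fixed positive constant.

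For the upper bound one only uses $|A_\mu|\le c_8(\mu+1)$ with $c_8=(1+C')/(1-C')$. By the triangle inequality $|A_\mu-A_\nu|^2\le2c_8^2\big((\mu+1)^2+(\nu+1)^2\big)$, so
\[
S\le 4c_8^2\Big(\sum_{\mu\ge0}(\mu+1)^2\rho^\mu\Big)\Big(\sum_{\nu\ge0}\rho^\nu\Big)
=4c_8^2\,\frac{1+\rho}{(1-\rho)^3}\cdot\frac1{1-\rho}\le\frac{8c_8^2}{(1-\rho)^4}.
\]

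The lower bound is the real content. The mechanism is that for large $\mu$ the correction factor in $A_\mu$ is geometrically small: fixing a constant $M_0\ge2$ (depending only on $C'$) with $C'^{M_0+1}\le\tfrac1{21}$, one has $A_\mu=(\mu+1)+\epsilon_\mu$ with $|\epsilon_\mu|\le\tfrac1{10}(\mu+1)$ for every $\mu\ge M_0$. Hence on the set of pairs $\mathcal G:=\{(\mu,\nu):10M_0\le\mu\le N-1,\ M_0\le\nu\le\mu/10\}$ one has $|A_\mu-A_\nu|\ge(\mu-\nu)-|\epsilon_\mu-\epsilon_\nu|\ge\tfrac12\mu$. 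Set $L:=\min\big(N-1,\lfloor(1-\rho)^{-1}\rfloor\big)$. The hypothesis $|\zeta_-|\le1-N^{-1}$ gives $1-\rho\ge2N^{-1}-N^{-2}\ge N^{-1}$, i.e. $(1-\rho)^{-1}\le N$, so $L\asymp(1-\rho)^{-1}$. If $L$ exceeds a fixed constant (i.e. $L\ge20M_0$) one restricts the sum to the sub-box $\mu\in[\lceil L/2\rceil,L]$, $\nu\in[M_0,\lfloor L/20\rfloor]\subset\mathcal G$; there $\rho^\mu\asymp\rho^\nu\asymp1$, $\mu\asymp L$, and both ranges contain $\asymp L$ integers, whence
\[
S\ \gtrsim\ \Big(\sum_{\mu=\lceil L/2\rceil}^{L}\rho^\mu\mu^2\Big)\Big(\sum_{\nu=M_0}^{\lfloor L/20\rfloor}\rho^\nu\Big)\ \gtrsim\ L^3\cdot L\ \asymp\ (1-\rho)^{-4}.
\]
In the remaining range $\rho\le1-c$ (where $(1-\rho)^{-4}=\mO(1)$) it is enough to keep the single pair $(\mu,\nu)=(10M_0,M_0)\in\mathcal G$, legitimate since $N\gg1$, which gives $S\gtrsim1$. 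Combining the two bounds yields $S\asymp(1-\rho)^{-4}$, hence \eqref{eq10.45}.

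The step I expect to be the main obstacle is this lower bound, and within it the point that truncating the sum at $\mu=N-1$ while letting $\nu$ run over an interval of length $\asymp(1-\rho)^{-1}$ still leaves enough room, i.e. that $\min\big(N,(1-\rho)^{-1}\big)\asymp(1-\rho)^{-1}$ — which is exactly where the constraint $|\zeta_-|\le1-N^{-1}$ (the left-hand inequality in \eqref{eq10.8}, \eqref{eq10.9}) enters. All implied constants depend only on $|b/a|$ and on the constant $C$ of \eqref{eq10.4}, hence are uniform over $\Sigma_{r_0,r_1}$.
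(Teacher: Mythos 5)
Your proof is correct and follows essentially the same route as the paper: both start from the exact identity \eqref{eq10.38}, use $|\partial_z\ln\zeta_-|\asymp1$, $|\zeta_--\zeta_+|\asymp1$ and $|F_{\mu+1}(\zeta_+/\zeta_-)|\asymp1$ under \eqref{eq10.9} to reduce to the pure arithmetic double sum $\sum_{\mu,\nu}\rho^{\mu+\nu}|A_\mu-A_\nu|^2$, and both exploit $A_\mu\approx\mu+1$ to show the sum is $\asymp(1-\rho)^{-4}$. The only difference is bookkeeping: the paper groups by $k=\mu+\nu$, shows $A_k\asymp k^3$, and cites Proposition~4.2 of \cite{SjVo15} for $\sum k^3\rho^k\asymp(1-\rho)^{-4}$, whereas you estimate the unreorganized double sum directly (factorized upper bound; explicit sub-box lower bound) — a self-contained but logically equivalent version of the same counting argument. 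One small slip: to get $\asymp L$ terms in the $\nu$-sum over $[M_0,\lfloor L/20\rfloor]$ you want a threshold like $L\ge 40M_0$ rather than $L\ge 20M_0$; this is a constant-level adjustment and does not affect the argument.
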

\begin{proof}
For simplicity we assume that $a=1$. Recall from \eqref{eq10.9} that we 
have \eqref{eq10.8}, so $0<\sqrt{|b/a|}\leq|\zeta_-|\leq 1-1/N$, where 
we also used \eqref{signcon2} for the first two inequalities. 
\par
We write $F_{\nu+1}=F_{\nu+1}(t)$. Set $t=\zeta_+/\zeta_-$, which 
satisfies $|b/a| \leq |t|\leq 1 - 1/C$, see the remark after \eqref{eq10.4}, 
which also implies that $|F_{\nu+1}(t)|\asymp 1$.
\par
By \eqref{eq10.38}, 
\begin{equation}\label{eq10.46}
\begin{split}
&\left(|\partial_z Z|^2-\frac{|(\partial_z Z|Z)|^2}{|Z|^2}\right) \\ 
& = |\partial_z\ln\zeta_-|^2\sum_{\mu,\nu =0}^{N-1} |\zeta_-|^{2(\mu+\nu)}|F_{\mu+1}F_{\nu+1}|^2
 \left| (\mu +1) \frac{1 + t^{\mu+1}}{1- t^{\mu+1}}
  - (\nu +1) \frac{1 + t^{\nu+1}}{1 - t^{\nu+1}}
      \right|^2\\
 &\asymp \sum_{\mu,\nu =0}^{N-1} |\zeta_-|^{2(\mu+\nu)}
 \left| (\mu +1) \frac{1 + t^{\mu+1}}{1- t^{\mu+1}}
  - (\nu +1) \frac{1 + t^{\nu+1}}{1 - t^{\nu+1}}
      \right|^2
  =
  \begin{cases}
   \leq S_{2(N-1)} \\ 
   \geq S_{N-1},
  \end{cases}
 \end{split}
\end{equation}
where
\begin{equation}\label{eq10.47}
\begin{split}
&S_M = \sum_0^M |\zeta_-|^{2k}A_k, \\
&A_k = \sum_{\nu+\mu=k}
\left| (\mu +1) \frac{1 + t^{\mu+1}}{1- t^{\mu+1}}
  - (\nu +1) \frac{1 + t^{\nu+1}}{1 - t^{\nu+1}}
      \right|^2.
 \end{split}
\end{equation}
Here
$$\left|\frac{1 + t^{\mu+1}}{1- t^{\mu+1}} \right| \asymp 1, \quad  
\left|\frac{1 + t^{\nu+1}}{1- t^{\nu+1}} \right| \asymp 1,$$ 
so $A_k=\mO(k^3)$. The terms in $A_k$ with $\mu \gg \nu $ and 
$\mu \ll \nu$ are $\asymp k^2$ and there are $\asymp k$ terms 
of that kind, so $A_k\geq \frac{1}{C}k^3$, for some $C\gg 1$. 
Thus, $A_k\asymp k^3$, for $k\gg 1$.
For $k=1$, 
\begin{equation}\label{eq10.47.1}
A_1 = 2
\left| 2\frac{1 + t^{2}}{1- t^{2}}
  -  \frac{1 + t}{1 - t}
      \right|^2 = 2. 
\end{equation}
Hence, using that all $A_k \geq 0$, and that $|\zeta_-|\leq 1 - 1/N$ 
(see above), we obtain 
\begin{equation}\label{eq10.48}
S_M \asymp \sum_0^M k^3|\zeta_-|^{2k} \asymp F_M(|\zeta_-|^2)^4 .
%
\end{equation}
Here, to obtain the second estimate, we used Proposition 4.2 of \cite{SjVo15}. 
To conclude the statement of the proposition observe that $S_{2(N-1)}$ 
and $S_{N-1}$ are of the same order of magnitude, that is $F_N(|\zeta_-|^2)^4$.
\end{proof}
Continuing, recall that $F_N(\zeta_+/\zeta_-)\asymp 1$ for $z$ satisfying \eqref{eq10.9} 
and that it depends holomorphically on 
$z \in \mathring{\Sigma}_1\backslash [-2\sqrt{ab},2\sqrt{ab}]$. For simplicity, we 
sharpen assumption \eqref{eq10.9} and assume 
\begin{equation}\label{eq10.48.5}
\begin{cases}
	z\in \Sigma_{(r_0-1/N),r_1} \\
	r_0>0 \text{ satisfies  \eqref{eq10.8}}, \\
	r_1= \sqrt{|b/a|} + 1/C, ~ C\gg 1.
	\end{cases}
\end{equation}
Next, note that by the Cauchy inequalities, for $z$ satisfying \eqref{eq10.48.5}, we have 
\begin{equation}\label{eq10.49}
	|\partial_z F_N(\zeta_+/\zeta_-)| 
	\leq \mathcal{O}(1).
\end{equation}
Furthermore, $\partial_z| F_N(\zeta_+/\zeta_-)|^2 = \mathcal{O}(1)$, 
$\partial_{z}\partial_{\bar{z}} |F_N(\zeta_+/\zeta_-)|^2 = \mathcal{O}(1)$.
Using this and \cite[Proposition 4.2]{SjVo15}, we obtain for $K_N$ 
as \eqref{eq10.35} that 
\begin{equation}\label{eq10.50}
	\begin{split}
	&\partial_z K_N = \partial_z K_{\infty} 
	+\mathcal{O}\!\left( 
	\frac{N |\zeta_-|^{2N}|\partial_z\ln\zeta_-|}{1-|\zeta_-|^2}
	\right) \\
	&\partial_{\bar{z}} K_N = \partial_{\bar{z}} K_{\infty} 
	+\mathcal{O}\!\left( 
	\frac{N |\zeta_-|^{2N}|\partial_z\ln\zeta_-|}{1-|\zeta_-|^2}
	\right) \\
	&\partial_{z}\partial_{\bar{z}} K_N =\partial_{z}\partial_{\bar{z}} K_{\infty} 
	+\mathcal{O}\!\left( 
	\frac{N^2 |\zeta_-|^{2N}|\partial_z\ln\zeta_-|^2}{1-|\zeta_-|^2}
	\right),
	\end{split}
\end{equation}
where
\begin{equation}\label{eq10.51}
	\begin{split}
	&K_{\infty} \asymp \frac{1 }{1-|\zeta_-|^2} \\
	& \partial_z K_{\infty}, \partial_{\bar{z}} K_{\infty} \asymp 
	\frac{N }{1-|\zeta_-|^2} \\
	&\partial_{z}\partial_{\bar{z}} K_{\infty} \asymp
	\frac{N^2}{1-|\zeta_-|^2}.
	\end{split}
\end{equation}
Thus, by Proposition \ref{prop10.2},
\begin{equation}\label{eq10.52}
\begin{split}
|\partial_z Z(z)|^2 - &\frac{|(Z(z)|\partial_z Z(z))|^2}{|Z(z)|^2} \\
&= 2K_{\infty}(z)^2\partial_{z}\partial_{\bar{z}}\ln K_{\infty}(z) 
+\mathcal{O}\!\left( \frac{N^2 |\zeta_-|^{2N}|\partial_z\ln\zeta_-|^2}{(1-|\zeta_-|^2)^2}
	\right).
\end{split}
\end{equation}
Combining Proposition \ref{prop10.3} with \eqref{eq10.52} and 
\eqref{eq10.51} with \eqref{eq10.3.6}, we see that
\begin{equation*}
\partial_{z}\partial_{\bar{z}}\ln K_{\infty}(z)\left( 1 
+\mathcal{O}\!\left( N^2 |\zeta_-|^{2N}|\partial_z\ln\zeta_-|^2
	\right)\right)
	\asymp (F_N(|\zeta_-|^2))^2.
\end{equation*}
Since $|\zeta_-|\leq 1 - 2/N$, see \eqref{signcon2} and \eqref{eq10.48.5}, 
it then follows that 
\begin{equation}\label{eq10.52.1}
 \partial_{z}\partial_{\bar{z}}\ln K_{\infty}(z) \asymp (F_N(|\zeta_-|^2))^2.
\end{equation}
Continuing, let $e_1(z),\dots,e_{N^2}(z)$ be as in Proposition \ref{prop10.1}. 
It has been observed in \cite[Section 5]{SjVo15} that if we we assume that 
\begin{equation}\label{eq10.52.5}
	|\nabla_z e_1(z)| = \mathcal{O}(m),
\end{equation}
for some weight $m\geq 1$, then 
\begin{equation}\label{eq10.53}
	\left| 
	\sum_{3}^{N^2} \alpha_j \nabla_z e_j
	\right| \leq \mathcal{O}(m)\|\alpha\|_{\C^{N^2-2}}.
\end{equation}
In the following we shall perform the same steps as in 
\cite{SjVo15}. We present this here for the readers convenience, 
so the reader already familiar with \cite{SjVo15} may skip ahead to 
formula \eqref{eq10.59}.
\par
Next we will show that we can take the weight $m=F_N(|\zeta_-|^2)$ 
in \eqref{eq10.52.5}. 
Using, \eqref{eq10.3.6}, \eqref{eq10.31}, we have 
\begin{equation}\label{eq10.54}
	\begin{split}
	\nabla_z e_1(z) &= \frac{\nabla_z  \overline{Z}(z)}{|Z(z)|} - 
	\frac{\nabla_z |Z(z)|}{|Z(z)|^2}\overline{Z}(z) \\
	&= \frac{\nabla_z  \overline{Z}(z)}{|Z(z)|} - 
	\frac{(\nabla_z Z(z)|Z(z))+(Z(z)|\overline{\nabla}_z Z(z))}{2|Z(z)|^3}\overline{Z}(z).
	\end{split}
\end{equation}
Using \eqref{eq10.3.6} and the Cauchy inequalities, we obtain 
the estimate 
\begin{equation}\label{eq10.54.1}
	|\partial_z Z(z)| \leq \frac{F_N(|\zeta_-|^2)}{\dist(z,\partial\Sigma_{1,r_1})}
	\leq \mO(1)(F_N(|\zeta_-|^2))^2,
\end{equation}
where in the second inequality we used that, 
$\dist(z,\partial\Sigma_{1,r_1}) \geq (1-|\zeta_-|)/C$, for 
some $C\gg 1$.
\par
Since $Z$ is holomorphic, we conclude the same 
estimates for $|\nabla_z Z|$ and $|\nabla_{z}\overline{Z}|$, 
and, by using the Cauchy-inequalities, 
\begin{equation}\label{eq10.54.2}
|\partial^2_z Z| \leq \mathcal{O}(F_N^3).
\end{equation}
Using this and the fact that $K_N=|Z|$ (cf. the remark after 
Proposition \ref{prop10.2}) in \eqref{eq10.54}, we get 
\begin{equation}\label{eq10.55}
|\nabla_z e_1| = \mathcal{O}(F_N).
\end{equation}
We can therefore take $m=F_N$  in the above. Let $f_2$ be the vector as 
in \eqref{eq10.32}, so that $e_2= |f_2|^{-1}f_2$. As in the proof of 
Proposition 5.1 in \cite{SjVo15}, we let $V_0$ be the isometry from $\C^{N^2-2}$
 to $\C^{N^2}$ defined by $V_0\nu_j^0 = e_j(z_0)$, $j=3,\dots,N^2$, where 
 $\nu_3^0,\dots,\nu_{N^2}^0$ is the standard basis of $\C^{N^2-2}$. 
 Moreover, for 
 $z$ in a complex neighbourhood of $z_0$, we let $V(z)=(1-e_1(z)e_1^*(z))V_0$. 
 Setting $U(z)= V(z)(V^*(z)V(z))^{-1/2}$, we get that $e_j=U(z)\nu_j^0$, $j=3,\dots,N^2$. 
 \par
 It has been shown in \cite{SjVo15} that \eqref{eq10.52.5} implies that 
 $\| \nabla_zU(z)\| = \mathcal{O}(m)$. Thus, by \eqref{eq10.55}, we obtain 
 $\| \nabla_zU(z)\| = \mathcal{O}(F_N)$. Consider
 \begin{equation}\label{eq10.56}
 	\begin{split}
	\nabla_zf_2(z) = &\nabla_z\overline{\partial_z Z(z)} - 
	\sum_{j\neq 2}\big[(\nabla_z\overline{\partial_z Z(z)}|e_j(z))e_j(z) \\
	&+(\overline{\partial_z Z(z)}|\nabla_ze_j(z))e_j(z) 
	 + 
	(\overline{\partial_z Z(z)}|e_j(z))\nabla_z e_j(z)\big].
	\end{split}
\end{equation}
By \eqref{eq10.54.2}, we have that 
$|\nabla_z\overline{\partial_z Z(z)} | = \mathcal{O}(F_N^3)$. Moreover, 
the term for $j=1$ in the sum is of order $\mathcal{O}(F_N^3)$. It remains to 
estimate, 
\begin{equation*}
 \begin{split}
 & \mathrm{I} = \sum_{3}^{N^2} (\nabla_z\overline{\partial_z Z(z)}|e_j(z))e_j(z) \\ 
 & \mathrm{II} = \sum_{3}^{N^2}(\overline{\partial_z Z(z)}|\nabla_ze_j(z))e_j(z) \\ 
 & \mathrm{III} = \sum_{3}^{N^2} (\overline{\partial_z Z(z)}|e_j(z))\nabla_z e_j(z).
  \end{split}
\end{equation*}
Here, $|\mathrm{I}| \leq |\nabla_z\overline{\partial_z Z}(z)| = \mathcal{O}(F_N^3)$ 
and, using \eqref{eq10.53}, $|\mathrm{III}| \leq  \mathcal{O}(F_N) |\overline{\partial_z Z}(z)|
=\mathcal{O}(F_N^3) $. Moreover, 
\begin{equation*}
	\mathrm{II} = \sum _3^{N^2} (\overline{\partial_z Z}(z)|\nabla_zU(z)\nu_j^0)e_j(z)
		= \sum _3^{N^2} ((\nabla_zU(z))^*\overline{\partial_z Z}(z)|\nu_j^0)e_j(z)
\end{equation*}
which yields that $|\mathrm{II}| = |(\nabla_zU(z))^*\overline{\partial_z Z}(z)| = 
\mathcal{O}(F_N^3)$. Hence, 
 \begin{equation}\label{eq10.57}
	|\nabla_zf_2(z)| = \mathcal{O}(F_N^3).
\end{equation}
By \eqref{eq10.33}, \eqref{eq10.45}, we have that for $z=z_0$ 
 \begin{equation*}
	|f_2(z_0)|^2 = 
	 |\partial_z Z(z_0)|^2 - \frac{|(Z(z_0)|\partial_z Z(z_0))|^2}{|Z(z_0)|^2}
	 \asymp F_N(|\zeta_-|^2)^4.
\end{equation*}
Thus, for $z$ in a neighbourhood of $z_0$
\begin{equation}\label{eq10.58}
 |f_2(z)|^2\asymp F_N(|\zeta_-|^2)^4.
\end{equation}
In view of \eqref{eq10.57} we then obtain that
 $\nabla_z|f_2(z)| =\mathcal{O}(F_N^3)$. Since, 
 $e_2 = |f_2|^{-1} f_2$,
 \begin{equation*}
 |\nabla e_2(z)|= \mathcal{O}( F_N(|\zeta_-|^2)).
\end{equation*}
So, 
 \begin{equation}
\left| \sum_2^{N^2}\alpha_j\partial_z e_j\right|
\leq \mathcal{O}(F_N(|\zeta_-|^2))\|\alpha\|_{\C^{N^2-1}}
\leq \mathcal{O}(NF_N(|\zeta_-|^2)),
\end{equation}
where in the last inequality we used that $\|Q_{\omega}\| 
= \|\alpha\| \leq C_1 N$. Combining this with \eqref{leq10.28}, 
\eqref{eq10.3.6}, \eqref{eq10.23}, \eqref{eq10.3.5} and 
\eqref{eq10.17.5}, we obtain 
\begin{equation}\label{eq10.59}
	\partial_zf  = \mathcal{O}(1)\left[ 
	\frac{N|\zeta_-|^{N-1}}{\delta F_N^2} + \frac{|\zeta_-|^N}{\delta F_N}
	+\delta N^2F_N + \frac{\delta N^2}{r_0-|\zeta_-|}
	\right].
\end{equation}
Here, the first term dominates the second and the fourth term 
dominates the third, thus 
\begin{equation}\label{eq10.59.1}
	\partial_z f  = \mathcal{O}(1)\left[ 
	\frac{N|\zeta_-|^{N-1}}{\delta F_N^2}+ \delta N^3
	\right].
\end{equation}
Similarly, using \eqref{leq10.29}, 
\begin{equation}\label{eq10.59.2}
\begin{split}
	\partial_{\bar{z}}f  = &\mathcal{O}(1)\left[ 
	\frac{|\zeta_-|^{N}}{\delta F_N} + \delta N^2 F_N + N|\zeta_-|^N + \delta^2 N^3 F_{N+1}^2
	+\delta N^2F_N 
	\right] \\ 
	&=\mathcal{O}(1)\left[ \frac{|\zeta_-|^{N}}{\delta F_N} +\delta N^2 F_N   \right].
\end{split}
\end{equation}
Repeating line by line (with the obvious changes) the proof 
of Proposition 5.3 in \cite{SjVo15}, we obtain the following, 
basically, identical result:
\begin{prop}\label{prop10.4}
We express $Q$ in the canonical basis in $\C^{N^2}$ or in any other fixed orthonormal 
basis . Let $e_1(z),\dots,e_{N^2}(z)$ be an orthonormal basis in $\C^{N^2}$ depending 
smoothly on $z$, with $e_1(z)=|Z(z)|^{-1}\overline{Z}(z)$, and 
$\C e_1(z)\oplus \C e_2(z) = \C \overline{Z}(z) \oplus \C \overline{\partial_z Z}(z)$. Write 
$Q = \alpha_1\overline{Z}(z) + \sum_2^{N^2}\alpha_j e_j(z)$, and recall that the hypersurface 
\begin{equation*}
	\{(z,Q) \in \Sigma_{r_0-1/N}\backslash\Sigma_{r_1} \times B(0,C_1N); g(z,Q)=0\}, 
\end{equation*}
is given by \eqref{eq10.22} with $f$ as in \eqref{eq10.23} (see also \eqref{eq10.11.1}, 
\eqref{eq10.48.5}).  
Then, the restriction of $dQ\wedge d\overline{Q} $ to this hypersurface is given by 
\begin{equation}\label{eq10.59.5}
 \begin{split}
 &dQ\wedge d\overline{Q} = J(f) dz \wedge d\overline{z}\wedge d\alpha' \wedge d\overline{\alpha}' \\ 
 & J(f) = - \frac{|\alpha_2|^2}{|Z|^2} |(e_2|\overline{\partial_z Z})|^2 \\
 	&\phantom{J(f) = -}+\mathcal{O}(1)|\alpha_2||F_N|\left(\frac{N |\zeta_-|^{N-1}}{F_N \delta} 
				+\delta N^3 F_N + |\alpha_2| F_N^2 \delta N\right)  \\ 
	&\phantom{J(f) = -}+\mathcal{O}(1)\left(\frac{N |\zeta_-|^{N-1}}{F_N \delta} +\delta N^3 F_N 
		+ |\alpha_2| F_N^2\delta N\right)^2,
 \end{split}
\end{equation}
 where $F_N=F_N(|\zeta_-|^2)$, $\alpha' =(\alpha_2,\dots,\alpha_{N^2})$ and 
 $d\alpha' \wedge d\overline{\alpha}' = d\alpha_2 \wedge d\overline{\alpha}_2\wedge \dots \wedge d\alpha_{N^2} \wedge d\overline{\alpha}_{N^2}$.
\end{prop}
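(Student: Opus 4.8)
The plan is to carry out, in the present notation, the determinant computation of \cite[Prop.~5.3]{SjVo15}, organised so that the main term and the two error families in \eqref{eq10.59.5} fall out of a single small determinant. First I parametrise the hypersurface by $(z,\alpha')\mapsto\big(z,Q(z,\alpha')\big)$ with $Q(z,\alpha')=f(z,\alpha')\overline Z(z)+\sum_{j\ge 2}\alpha_j e_j(z)$, $\alpha_1=f$ as in \eqref{eq10.22}--\eqref{eq10.23}. Since $\overline Z(z)$ is antiholomorphic in $z$, one has $\partial_zQ=A_z:=(\partial_zf)\overline Z+\sum_{j\ge2}\alpha_j\partial_ze_j$, $\partial_{\overline z}Q=A_{\overline z}:=(\partial_{\overline z}f)\overline Z+f\,\overline{\partial_zZ}+\sum_{j\ge2}\alpha_j\partial_{\overline z}e_j$ and $\partial_{\alpha_j}Q=B_j:=(\partial_{\alpha_j}f)\overline Z+e_j$, so that on the hypersurface $dQ=A_z\,dz+A_{\overline z}\,d\overline z+\sum_{j\ge2}B_j\,d\alpha_j$.

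\emph{Reduction to the moving frame.} As in \cite[\S5]{SjVo15}, passing from the fixed orthonormal basis to $e_1(z),\dots,e_{N^2}(z)$ costs only a unimodular phase, and the connection $1$-form $U^*dU$ of the moving frame recombines with its derivatives, giving $dQ\wedge d\overline Q=\varepsilon\bigwedge_{k=1}^{N^2}\big(\omega_k\wedge\overline\omega_k\big)$, $\varepsilon=\pm1$, with $\omega_k=(A_z\,|\,e_k)\,dz+(A_{\overline z}\,|\,e_k)\,d\overline z+\sum_{j\ge2}(B_j\,|\,e_k)\,d\alpha_j$. Here $(B_j\,|\,e_k)=\delta_{jk}$ for $k\ge2$, while $(B_j\,|\,e_1)=|Z|\,\partial_{\alpha_j}f=:\gamma_j=\mO(F_N\delta N)$ by \eqref{leq10.30}; hence $\omega_k=(A_z\,|\,e_k)\,dz+(A_{\overline z}\,|\,e_k)\,d\overline z+d\alpha_k$ for $k\ge2$, and $\omega_1=(A_z\,|\,e_1)\,dz+(A_{\overline z}\,|\,e_1)\,d\overline z+\sum_{j\ge2}\gamma_j\,d\alpha_j$. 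Since for $k\ge3$ the forms $d\alpha_k$ and $d\overline\alpha_k$ occur only in $\omega_k$, resp.\ $\overline\omega_k$, those slots are forced with coefficient $1$, up to corrections in which $\omega_1$ (or $\overline\omega_1$) supplies some $d\alpha_k$, $k\ge3$, each weighted by $\gamma_k$. What remains is a $4\times4$ determinant $D$ in the variables $e_1,e_2$ against $dz,d\overline z,d\alpha_2,d\overline\alpha_2$.

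\emph{The main term.} In $D$ only the entries $(A_z\,|\,e_1)$ and $(A_{\overline z}\,|\,e_1)$ fail to be small, so $J(f)=\varepsilon\big(|(A_z\,|\,e_1)|^2-|(A_{\overline z}\,|\,e_1)|^2\big)+(\text{corrections})$, and tracking $\varepsilon$ through the ordering conventions produces the sign in \eqref{eq10.59.5}. The crucial point is that the choice \eqref{eq10.32} of $e_2$ forces $\overline{\partial_zZ}\in\C e_1\oplus\C e_2$, whence $(\partial_{\overline z}e_1\,|\,e_j)=0$ for $j\ge3$, so that $\sum_{j\ge2}\alpha_j(\partial_ze_j\,|\,e_1)=-\sum_{j\ge2}\alpha_j\,\overline{(\partial_{\overline z}e_1\,|\,e_j)}=-\alpha_2\,\overline{(\partial_{\overline z}e_1\,|\,e_2)}=-\dfrac{\alpha_2\,(e_2\,|\,\overline{\partial_zZ})}{|Z|}$, using $\partial_{\overline z}e_1=|Z|^{-1}\overline{\partial_zZ}-|Z|^{-2}(\partial_{\overline z}|Z|)\overline Z$. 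Hence $(A_z\,|\,e_1)=|Z|\,\partial_zf-\alpha_2(e_2\,|\,\overline{\partial_zZ})/|Z|$, and squaring yields the main term $\dfrac{|\alpha_2|^2}{|Z|^2}\,|(e_2\,|\,\overline{\partial_zZ})|^2$ together with $\mO(|Z|^2|\partial_zf|^2)$ and $\mO\big(|\alpha_2|\,|(e_2\,|\,\overline{\partial_zZ})|\,|\partial_zf|\big)$, which by \eqref{eq10.59.1}, \eqref{eq10.3.6}, Proposition~\ref{prop10.3} and \eqref{eq10.45} are of the stated form. Dually, $(A_{\overline z}\,|\,e_1)=|Z|\partial_{\overline z}f+f\,(\overline{\partial_zZ}\,|\,e_1)$, and \eqref{leq10.29} together with $\partial_{\overline z}|Z|^2=(Z\,|\,\partial_zZ)$ and $(\overline{\partial_zZ}\,|\,e_1)=(Z\,|\,\partial_zZ)/|Z|$ shows these cancel to leading order, so $|(A_{\overline z}\,|\,e_1)|^2$ is absorbed into the error.

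\emph{Error bookkeeping and the main obstacle.} It remains to bound everything omitted: the $\gamma_j$-entries of $D$, the entries $(A_z\,|\,e_2),(A_{\overline z}\,|\,e_2)$, the $\gamma_k$-poaching corrections for $k\ge3$, and the difference between evaluating at a general point and at the nearby base point $z_0$ of Proposition~\ref{prop10.1}. Using $\partial_ze_j|_{z_0}=\partial_{\overline z}e_j|_{z_0}=0$ for $j\ge3$ (so $(A_z\,|\,e_k)|_{z_0}=\alpha_2(\partial_ze_2\,|\,e_k)$, similarly for $A_{\overline z}$), the bounds $|\nabla_ze_1|,|\nabla_ze_2|=\mO(F_N)$ from \eqref{eq10.55} and the lines that follow it, the estimate \eqref{eq10.53}, $\partial_{\alpha_j}f=\mO(\delta N)$, $\|\alpha\|\le C_1N$, and Cauchy--Schwarz over the $N^2-2$ bulk directions, each such contribution is $\le\mO(1)\,|\alpha_2|F_N\big(\tfrac{N|\zeta_-|^{N-1}}{F_N\delta}+\delta N^3F_N+|\alpha_2|F_N^2\delta N\big)+\mO(1)\big(\cdots\big)^2$. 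This last point is the genuine work: the determinant expansion produces a great many mixed terms, and one must organise them (as in \cite[Prop.~5.3]{SjVo15}) so that the accumulation over the $N^2-2$ bulk directions never exceeds the claimed bound, while verifying that the forced matching of the $d\alpha_k,d\overline\alpha_k$ slots for $k\ge3$ and the passage to the moving frame introduce no hidden error. Assembling the pieces yields \eqref{eq10.59.5}.
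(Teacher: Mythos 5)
Your proposal reconstructs, in the present notation, the argument that the paper simply defers to (the proof of Proposition~5.3 in \cite{SjVo15}): parametrising the zero set by $(z,\alpha')$, expressing $dQ\wedge d\overline Q$ in the moving frame $e_1,\dots,e_{N^2}$, isolating the $4\times4$ block in the $e_1,e_2$ directions, extracting the main term from $(A_z|e_1)$, and cataloguing the error families using \eqref{leq10.30}, \eqref{eq10.59.1}, \eqref{leq10.29}, \eqref{eq10.53}, \eqref{eq10.55} and Proposition~\ref{prop10.3}. This is exactly the route the authors cite, and your outline of the determinant structure and the cancellation in $(A_{\overline z}|e_1)$ is correct, so the proposal matches the paper's approach.
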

Note that the Jacobian $J(f)$ in \eqref{eq10.59.5} is invariant under any 
$z$-dependent unitary change of variables $\alpha_2,\dots,\alpha_{N^2} 
\mapsto \alpha'_2,\dots,\alpha'_{N^2} $. Therefore, to calculate $J(f)$, and 
thus $\xi$, at any given point $(z_0,\alpha_0)$ we may choose the most 
appropriate orthogonal basis $e_2(z),\dots,e_{N^2}$ in $\overline{Z}(z)^{\perp}$ 
depending smoothly on $z$.
\section{The average density}
Recall \eqref{eq10.11.2}. Using \eqref{eq10.12}, \eqref{eq10.13}, it follows by a general 
formula, obtained in Section 3 of \cite{SjVo15}, that
\begin{equation}\label{eq10.62}
 K_{\varphi}
 =
 \int \varphi(z)\xi(z) L(dz),
\end{equation}
with 
\begin{equation}\label{eq10.63}
 \xi(z) = \pi^{-N^2}\int_{{\tiny |f(z)|^2|Z(z)|^2 + |\alpha'|^2 \leq (C_1N)^2}} \e ^{-|f(z)|^2|Z(z)|^2-|\alpha'|^2}
 	J(f(z,\alpha'))L(d\alpha').
\end{equation}
where $f$ is as in \eqref{eq10.23} and $J$ is as in Proposition \ref{prop10.4}. Recall that 
we work under the hypotheses \eqref{grpp.1} and \eqref{eq10.48.5}. The latter in particular 
implies \eqref{eq10.6}, \eqref{eq10.7}. Applying these to \eqref{eq10.23} we obtain
\begin{equation}\label{eq10.64}
  |f| \leq \mathcal{O}(1)\left( \frac{g_0(z)}{\delta N F_N} + \delta N F_N \right)\frac{N}{F_N}
  \ll \frac{N}{F_N}.
\end{equation}
Now we strengthen assumptions \eqref{grpp.1}, \eqref{eq10.7} to 
\begin{equation}\label{eq10.65}
 \left( \frac{|\zeta_-|^N}{\delta N F_N} + \delta N F_N \right)
  \ll \frac{1}{N}.
\end{equation}
Then, 
\begin{equation*}
\e ^{-|f(z)|^2|Z(z)|^2} = 1 + 
\mathcal{O}(1)\left( \frac{|\zeta_-|^N}{\delta N F_N} + \delta N F_N \right)^2N^2.
\end{equation*}
Thus, using \eqref{eq10.59.5}
\begin{equation}\label{eq10.66}
	\begin{split}
	&\xi(z) = \left(1 + \mathcal{O}(1)\left( \frac{|\zeta_-|^N}{\delta N F_N} 
		 	+ \delta N F_N \right)^2N^2 \right) \cdot \\
			&\frac{|(e_2|\overline{\partial_z Z})|^2}{|Z|^2}\int_{|(f|Z|,\alpha')|\leq C_1N} 
			 |\alpha_2|^2
			\e^{-|\alpha'|^2}\pi^{-N^2}L(d\alpha')\\ 
	& + \mathcal{O}(1)\int_{|(f|Z|,\alpha')|\leq C_1N} 
	|\alpha_2||F_N|\left(\frac{N |\zeta_-|^{N-1}}{F_N \delta} 
				+\delta N^3 F_N + |\alpha_2| F_N^2 \delta N\right) 
	\e^{-|\alpha'|^2}\frac{L(d\alpha')}{\pi^{N^2}}\\
	& + \mathcal{O}(1)\int_{|(f|Z|,\alpha')|\leq C_1N} 
	\left(\frac{N |\zeta_-|^{N-1}}{F_N \delta} +\delta N^3 F_N 
		+ |\alpha_2| F_N^2\delta N\right)^2
	\e^{-|\alpha'|^2}\frac{L(d\alpha')}{\pi^{N^2}}.
	\end{split}
\end{equation}
By \eqref{eq10.64}, $|f| |Z| \ll N$. Therefore, the first integral is equal to 
\begin{equation*}
\frac{1}{\pi^2}\int |w|^2 \e^{-|w|^2}L(dw) 
+ \mathcal{O}\!\left(\e^{-\frac{N^2}{\mO(1)}}\right) 
= \frac{1}{\pi}\left(1 +  \mathcal{O}\!\left(\e^{-\frac{N^2}{\mO(1)}}\right) \right).
\end{equation*}
The sum of the other two integrals is equal to 
\begin{equation*}
\mathcal{O}(1)\left[
\left(\frac{N|\zeta_-|^{N-1}}{F_N\delta}+\delta N^3F_N\right)^2 + 
F_N\left(\frac{N|\zeta_-|^{N-1}}{F_N\delta}+\delta N^3F_N\right)
\right].
\end{equation*}
We have seen that 
\begin{equation}\label{eq10.66.5}
	\frac{|(e_2|\overline{\partial_z Z})|^2}{|Z|^2} = \mathcal{O}(F_N^2).
\end{equation}
Therefore, we obtain
\begin{equation}\label{eq10.67}
	\begin{split}
	\xi(z) = &\frac{1}{\pi}\frac{|(e_2|\overline{\partial_z Z})|^2}{|Z|^2}\\
	&+
	\mathcal{O}(1)\left[
\left(\frac{N|\zeta_-|^{N-1}}{F_N\delta}+\delta N^3F_N\right)^2 + 
F_N\left(\frac{N|\zeta_-|^{N-1}}{F_N\delta}+\delta N^3F_N\right)
\right].
	\end{split}
\end{equation}
Next, let us study the leading term in \eqref{eq10.67}. Since 
$\overline{\partial_z Z}$ belongs to the span of $e_1 = \overline{Z}/|Z|$ 
and $e_2$ for $z=z_0$, we obtain by Pythagoras' theorem that the leading term 
is equal to 
\begin{equation}\label{eq10.68}
	\frac{1}{\pi |Z|^2}\left(
	|\overline{\partial_z Z}|^2 - 
	\frac{|(\partial_z Z| Z)|^2}{|Z|^2}
	\right), \text{ for } z=z_0.
\end{equation}
By the remark after Proposition \ref{prop10.4}, this is then true 
for all $z$. 
\\
\par
%
%
Recall from the remark after Proposition \ref{prop10.2} that $K_N=|Z|$. 
Similarly to \eqref{eq10.50}, using \eqref{eq10.51} we get that 
$K_N= K_{\infty}(1 + \mathcal{O}(|\zeta_-|^{2N})$, 
where $K_{\infty} \asymp (1-|\zeta_-|^2)^{-1}$. Using this and \eqref{eq10.52}, 
we see that \eqref{eq10.67} becomes 
\begin{equation}\label{eq10.69}
	\begin{split}
	\xi(z) = &\frac{2}{\pi}\partial_{z}\partial_{\bar{z}}\ln K_{\infty}(z) 
+\mathcal{O}\!\left(N^2 |\zeta_-|^{2N}|\partial_z\ln\zeta_-|^2
	\right)\\
	&+
	\mathcal{O}(1)\left[
\left(\frac{N|\zeta_-|^{N-1}}{F_N\delta}+\delta N^3F_N\right)^2 + 
F_N\left(\frac{N|\zeta_-|^{N-1}}{F_N\delta}+\delta N^3F_N\right)
\right],
	\end{split}
\end{equation}
where by \eqref{eq10.52.1} 
\begin{equation}\label{eq10.69.5}
	\frac{2}{\pi}\partial_{z}\partial_{\bar{z}}\ln K_{\infty}(z) 
	\asymp F_N^2(|\zeta_-|^2).
\end{equation}
Thus, the error term in \eqref{eq10.69} can be written as 
\begin{equation}\label{eq10.70}
	\begin{split}
	\mathcal{O}(F_N^2)\left(\frac{N^2 |\zeta_-|^{2N}|\partial_z\ln\zeta_-|^2}{F_N^2}
	+\frac{N^2 |\zeta_-|^{2N-2}}{\delta^2 F_N^4} + \delta^2 N^6 + \frac{N |\zeta_-|^{N-1}}{\delta F_N^2} 	+ \delta N^3
	\right).
	\end{split}
\end{equation}
By \eqref{eq10.65}, we have that $(\delta F_N)^{-1} \gg N^2$. Thus, by \eqref{grpp.1} 
(which is implied by \eqref{eq10.65}), the second term in \eqref{eq10.70} is 
\begin{equation*}
 \gg \frac{N^6 |\zeta_-|^{2N-2}}{F_N^2}
\end{equation*}
which dominates the first term. Strengthening assumption \eqref{eq10.65} to 
\begin{equation}\label{eq10.71}
	\left( \frac{|\zeta_-|^{N-1} N}{\delta F_N^2} +\delta N^3\right) \ll 1,
\end{equation}
the remainder becomes
\begin{equation}\label{eq10.72}
	\mathcal{O}(F_N^2)\left(\frac{N |\zeta_-|^{N-1}}{\delta F_N^2}
 	+ \delta N^3
	\right).
\end{equation}
By \eqref{eq10.3.6}, assumption \eqref{eq10.71} is 
equivalent to 
\begin{equation}\label{eq10.72.1}
	\left( \frac{|\zeta_-|^{N-1} N}{\delta}(1-|\zeta_-|)^2 +\delta N^3\right) \ll 1.
\end{equation}
Note that for $1/C\leq r_0 \leq 1-1/N$, for some $C\gg 1$, the function 
$[0,r_0]\ni r\mapsto r^{N-1}(1-r)^2$ is increasing. Thus, unifying our 
previous assumptions, we assume that 
$z\in\Sigma_{r_0-1/N}\backslash \Sigma_{r_1}$, with $r_0$ satisfying 
$1/C\leq r_0 \leq 1-1/N$ and \eqref{eq10.72.1} with $|\zeta_-|$ replaced 
by $r_0$, and $r_1$ as in \eqref{eq10.48.5} (note that this assumption 
implies \eqref{eq10.48.5}, \eqref{grpp.1} and \eqref{eq10.72.1}). 
\par
Then, 
by \eqref{eq10.69}, \eqref{eq10.69.5}, \eqref{eq10.3.6} we conclude that 
\begin{equation}\label{eq10.73}
	\xi(z) = \frac{2}{\pi}\partial_{z}\partial_{\bar{z}}\ln K_{\infty}(z) 
	\left(1 +\mathcal{O}\!\left(\frac{N |\zeta_-|^{N-1}}{\delta}(1-|\zeta_-|)^2
 	+ \delta N^3
	\right)\right).
\end{equation}
We have proved Theorem \ref{thm1}, the main result of this paper.
\providecommand{\bysame}{\leavevmode\hbox to3em{\hrulefill}\thinspace}
\providecommand{\MR}{\relax\ifhmode\unskip\space\fi MR }
\providecommand{\MRhref}[2]{%
  \href{http://www.ams.org/mathscinet-getitem?mr=#1}{#2}
}
\providecommand{\href}[2]{#2}


\begin{thebibliography}{10}

\bibitem{BSZ00}
P.~Bleher, B.~Shiffman, and S.~Zelditch, \emph{{Universality and scaling of
  correlations between zeros on complex manifolds}}, Inventiones Mathematicae
  \textbf{142} (2000), 351--395, 10.1007/s002220000092.

\bibitem{BM}
W.~Bordeaux-Montrieux, \emph{{Loi de Weyl presque s{\^u}re et r{\'e}solvent
  pour des op{\'e}rateurs diff{\'e}rentiels non-autoadjoints, Th{\'e}se}},
  pastel.archives-ouvertes.fr/docs/00/50/12/81/PDF/manuscrit.pdf (2008).

\bibitem{BoCa16}
C.~Bordenave and M.~Capitaine, \emph{Outlier eigenvalues for deformed i.i.d random
  matrices}, Matrices. Commun. Pur. Appl. Math.. doi:10.1002/cpa.21629,
  arxiv.org/abs/1403.6001 (2016).

\bibitem{ZwChrist10}
T.J. Christiansen and M.~Zworski, \emph{{Probabilistic Weyl Laws for Quantized
  Tori}}, Communications in Mathematical Physics \textbf{299} (2010).

\bibitem{Da07}
E.~B. Davies, \emph{{Non-Self-Adjoint Operators and Pseudospectra}}, {Proc.
  Symp. Pure Math.}, vol.~76, Amer. Math. Soc., 2007.

\bibitem{DaHa09}
E.B. Davies and M.~Hager, \emph{{Perturbations of Jordan matrices}}, J. Approx.
  Theory \textbf{156} (2009), no.~1, 82--94.

\bibitem{TrEm05}
M.~Embree and L.~N. Trefethen, \emph{{Spectra and Pseudospectra: The Behavior
  of Nonnormal Matrices and Operators}}, Princeton University Press, 2005.

\bibitem{GoKh00}
I.Y. Goldsheid and B.A. Khoruzhenko, \emph{Eigenvalue curves of asymmetric
  tridiagonal random matrices}, Elec. J. of Probability. \textbf{5} (2000),
  no.~16, 1--28.

\bibitem{GuMaZe14}
A.~Guionnet, P.~Matchett Wood, and {0. Zeitouni}, \emph{{Convergence of the
  spectral measure of non-normal matrices}}, Proc.~AMS \textbf{142} (2014),
  no.~2, 667--679.

\bibitem{Ha06b}
M.~Hager, \emph{{Instabilit{\'e} Spectrale Semiclassique d{\rq}Op{\'e}rateurs
  Non-Autoadjoints II}}, Annales Henri Poincare \textbf{7} (2006), 1035--1064.

\bibitem{Ha06}
\bysame, \emph{{Instabilit{\'e} spectrale semiclassique pour des op{\'e}rateurs
  non-autoadjoints I: un mod{\`e}le}}, Annales de la facult{\'e} des sciences
  de Toulouse S{\'e}. 6 \textbf{15} (2006), no.~2, 243--280.

\bibitem{HaSj08}
M.~Hager and J.~Sj{\"o}strand, \emph{{Eigenvalue asymptotics for randomly
  perturbed non-selfadjoint operators}}, Mathematische Annalen \textbf{342}
  (2008), 177--243.

\bibitem{HaNe96}
N.~Hatano and D.R. Nelson, \emph{Localization transitions in non-hermitian
  quantum mechanics}, Physical Review Letters \textbf{77} (1996), 570--573.

\bibitem{HoKrPeVi09}
J.B. Hough, M.~Krishnapur, Y.~Peres, and B.~Vir{\'a}g, \emph{{Zeros of Gaussian
  Analytic Functions and Determinantal Point Processes}}, American Mathematical
  Society, 2009.

\bibitem{SZ03}
B.~Shiffman and S.~Zelditch, \emph{{Equilibrium distribution of zeros of random
  polynomials}}, Int. Math. Res. Not. (2003), 25--49.

\bibitem{Sj15}
J.~Sj{\"o}strand, \emph{{Non-self-adjoint differential operators, spectral
  asymptotics and random perturbations }}, Monograph in preparation,
  http://sjostrand.perso.math.cnrs.fr/.

\bibitem{SjAX1002}
\bysame, \emph{{Spectral properties of non-self-adjoint operators}}, Actes des
  Journ{\'e}es d'{\'e}.d.p. d'{\'E}vian, arxiv.org/abs/1002.4844 (2009).

\bibitem{SjVo15}
J.~Sj{\"o}strand and M.~Vogel, \emph{{Interior eigenvalue density of Jordan
  matrices with random perturbations}},  (2015), accepted for publication as
  part of a book in honour of Mikael Passare in the series Trends in
  Mathematics, Springer/Birkh{\"a}user, arxiv.org/abs/1412.2230.

\bibitem{SjVo15b}
\bysame, \emph{Large bi-diagonal matrices and random perturbations}, preprint
  arxiv.org/abs/1512.06076 (2015).

\bibitem{Sj09b}
Johannes Sj\"ostrand, \emph{Counting zeros of holomorphic functions of
  exponential growth}, Journal of pseudodifferential operators and applications
  \textbf{1} (2010), no.~1, 75--100.

\bibitem{So00}
M.~Sodin, \emph{{Zeros of Gaussian Analytic Functions and Determinantal Point
  Processes}}, Mathematical Research Letters (2000), no.~7, 371--381.

\bibitem{Vo14}
M.~Vogel, \emph{{The precise shape of the eigenvalue intensity for a class of
  non-selfadjoint operators under random perturbations}},  (2014),
  arxiv.org/abs/1401.8134.

\end{thebibliography}
\end{document}